\DeclareMathOperator{\id}{id}
\DeclareMathOperator{\Hom}{Hom}
\DeclareMathOperator{\HF}{\mathbf{H}\mathfrak{F}}
\DeclareMathOperator{\vcd}{vcd}
\DeclareMathOperator{\cd}{cd}
\DeclareMathOperator{\pd}{pd}
\DeclareMathOperator{\HeckeFcd}{\mathcal{H}_\mathfrak{F}cd}
\DeclareMathOperator{\FP}{FP}
\DeclareMathOperator{\Ext}{Ext}
\DeclareMathOperator{\ZZ}{\mathbb{Z}}
\DeclareMathOperator{\QQ}{\mathbb{Q}}
\DeclareMathOperator{\ucd}{\underline{cd}}
\DeclareMathOperator{\ugd}{\underline{gd}}
\newcommand{\uE}{\underline{\mathrm{E}}}
\theoremstyle{plain}
\newtheorem{Lemma}{Lemma}[section]
\newtheorem{Theorem}[Lemma]{Theorem}
\newtheorem{Cor}[Lemma]{Corollary}
\newtheorem{Prop}[Lemma]{Proposition}
\theoremstyle{definition}
\newtheorem{Example}[Lemma]{Example}
\newtheorem{Question}[Lemma]{Question}
\newtheorem{Remark}[Lemma]{Remark}
\newtheorem*{Acknowledgements*}{Acknowledgements}
\DeclareMathOperator{\silp}{silp}
\DeclareMathOperator{\spli}{spli}
\DeclareMathOperator{\Gcd}{Gcd}
\DeclareMathOperator{\Gpd}{Gpd}
\DeclareMathOperator{\Fpd}{\mathfrak{F}pd}
\DeclareMathOperator{\FGcd}{\mathfrak{F}_{\mathrm{G}}cd}
\DeclareMathOperator{\FExt}{\mathfrak{F}Ext}
\DeclareMathOperator{\GExt}{GExt}
\DeclareMathOperator{\FH}{\mathfrak{F}\mathit{H}}
\DeclareMathOperator{\FhatH}{\widehat{\mathfrak{F}\mathit{H}}}
\DeclareMathOperator{\GH}{G\mathit{H}}
\DeclareMathOperator{\FGExt}{\mathfrak{F}_GExt}
\DeclareMathOperator{\Fcd}{\mathfrak{F}cd}
\DeclareMathOperator{\FGH}{\mathfrak{F}_G\mathit{H}}
\DeclareMathOperator{\FGpd}{\mathfrak{F}_Gpd}
\title{On the Gorenstein and \texorpdfstring{$\mathfrak{F}$}{F}-cohomological dimensions}
\author{Simon St. John-Green}
\email{Simon.StJG@gmail.com}
\address{Department of Mathematics, University of Southampton, SO17 1BJ, UK}
\date{\today}
\subjclass{20J05, 18G}
\keywords{$\mathfrak{F}$-Cohomological Dimension, Gorenstein Cohomological Dimension}
\begin{document}

{\abstract{We prove that for any discrete group $G$ with finite $\mathfrak{F}$-cohomological dimension, the Gorenstein cohomological dimension equals the $\mathfrak{F}$-cohomological dimension.  This is achieved by constructing a long exact sequence of cohomological functors, analogous to that constructed by Avramov and Martsinkovsky in \cite{AvramovMartsinkovsky-AbsoluteRelativeAndTateCohomology-FiniteGorenstein}, containing the $\mathfrak{F}$-cohomology and complete $\mathfrak{F}$-cohomology.  As a corollary we improve upon a theorem of Degrijse concerning subadditivity of the $\mathfrak{F}$-cohomological dimension under group extensions \cite[Theorem B]{Degrijse-ProperActionsAndMackeyFunctors}.}}

\maketitle

\section{Introduction}\label{section:introduction}

Throughout, $G$ denotes a discrete group and $R$ a commutative ring.

Let $n_G$ denote the minimal dimension of a contractible proper $G$-CW-complex and $\ugd G$ the minimal dimension of a model for $\uE G$, the classifying space for proper actions of $G$.  Clearly $n_G \le \ugd G$ and Kropholler and Mislin have conjectured that if $n_G < \infty$ then $\ugd G < \infty$ \cite[Conjecture 43.1]{GuidoBookOfConjectures}, they verified the conjecture for groups of type $\FP_\infty$ \cite{KrophollerMislin-GroupsOnFinDimSpacesWithFinStab} and later L\"uck proved it for groups with a bound on the lengths of chains of finite subgroups \cite{Luck-TypeOfTheClassifyingSpace}.  

The algebraic invariant best suited to the study of $\ugd G$ is the Bredon cohomological dimension.  Bredon cohomology was introduced in \cite{Bredon-EquivariantCohomologyTheories}, and extended to infinite groups in \cite{Lueck}.  We denote by $\ucd G $ the Bredon cohomological dimension over $R$.

The $\mathfrak{F}$-cohomology was suggested by Nucinkis as an algebraic analog of $n_G$ \cite{Nucinkis-CohomologyRelativeGSet}, it is a special case of the relative homology of Mac Lane \cite{MacLane-Homology} and Eilenberg--Moore \cite{EilenbergMoore-FoundationsOfRelativeHomologicalAlgebra}.  
Let $\mathfrak{F}$ denote the family of finite subgroups of $G$ and let $\Delta$ denote the $G$-set $\coprod_{H \in \mathfrak{F}} G/H$, we say that a module is \emph{$\mathfrak{F}$-projective} if it is a direct summand of a module of the form $N \otimes_{R} R \Delta$ where $N$ is any $R G$-module.  Short exact sequences are replaced with $\mathfrak{F}$-split short exact sequences---short exact sequences which split when restricted to any finite subgroup of $G$, or equivalently which split when tensored with $R \Delta$.  The class of $\mathfrak{F}$-split short exact sequences is allowable in the sense of Mac Lane, and the projective modules with respect to these sequences are exactly the $\mathfrak{F}$-projectives.
There are enough $\mathfrak{F}$-projectives and one can define a cohomology theory, denoted $\FExt^*_{RG}$:
\[  \FExt^*_{RG}(M, N ) = H^*\Hom_{RG}(P_*, N)  \]
Where $P_*$ is a $\mathfrak{F}$-split resolution of $M$ by $\mathfrak{F}$-projective modules.  We define
\[ \FH^n(G, M) = \FExt^*_{RG}(R, M) \]
The $\mathfrak{F}$-cohomological dimension, denoted $\Fcd G$, is the shortest length of a $\mathfrak{F}$-split $\mathfrak{F}$-projective resolution of $R$.

By a result of Bouc and Kropholler--Wall $\Fcd G \le n_G$ \cite{Bouc-LeComplexeDeChainesDunGComplexe,KrophollerWall-GroupActionsOnAlgebraicCellComplexes} but it is unknown if $\Fcd G < \infty$ implies $n_G < \infty$ or if there exist examples where the invariants differ.  Nucinkis posed an algebraic version of the Kropholler--Mislin conjecture, asking if the finiteness of $\Fcd G $ and $\ucd G$ are equivalent \cite{Nucinkis-EasyAlgebraicCharacterisationOfUniversalProperGSpaces}.

A module is \emph{Gorenstein projective} if it is a cokernel in a strong complete resolution of $RG$-modules, these were first defined over an arbitrary ring by Enochs and Jenda \cite{EnochsJenda-GorensteinInjectiveAndProjectiveModules}.  We will give a full explanation of complete resolutions in Section \ref{subsection:complete resolutions}.  The Gorenstein projective dimension $\Gpd M$ is the minimal length of a resolution of $M$ by Gorenstein projective modules.  Equivalently, $\Gpd M \le n$ if and only if $M$ admits a complete resolution of coincidence index $n$ \cite[p.864]{BahlekehDembegiotiTalelli-GorensteinDimensionAndProperActions}.

The \emph{Gorenstein cohomological dimension} of a group $G$, denoted $\Gcd G$, is the Gorenstein projective dimension of $R$.  If $G$ is virtually torsion-free then $\Gcd G = \vcd G$ \cite[Remark 2.9(1)]{BahlekehDembegiotiTalelli-GorensteinDimensionAndProperActions}.  Indeed the Gorenstein cohomology can be seen of as a generalisation of the virtual cohomological dimension.  Bahlekeh, Dembegioti and Talelli have conjectured that $\Gcd G < \infty$ implies that $\ucd G < \infty$ \cite[Conjecture 3.5]{BahlekehDembegiotiTalelli-GorensteinDimensionAndProperActions}.

By \cite[Lemma 2.21]{AsadollahiBahlekehSalarian-HierachyCohomologicalDimensions}, every permutation $R G$-module with finite stabilisers is Gorenstein projective, so combining with \cite[Lemma 3.4]{Gandini-CohomologicalInvariants} gives that $\Gcd G \le \Fcd G$.  

In general we have the following chain of inequalities.
\[ \Gcd G \le \Fcd G \le n_G \le \ucd G \]
We prove the following:

\theoremstyle{plain}\newtheorem*{CustomThmA*}{Theorem \ref{thm:Fcd finite then Fcd=Gcd}}
\begin{CustomThmA*}
If $\Fcd G < \infty$ then $\Fcd G = \Gcd G$.
\end{CustomThmA*}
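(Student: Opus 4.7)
The plan is to adapt the Avramov--Martsinkovsky machinery from \cite{AvramovMartsinkovsky-AbsoluteRelativeAndTateCohomology-FiniteGorenstein} to the $\mathfrak{F}$-relative setting, as the abstract announces.  Since the chain of inequalities in the introduction already gives $\Gcd G\le \Fcd G$, it suffices to prove $\Fcd G\le \Gcd G$ under the standing hypothesis that $\Fcd G<\infty$.  The argument will proceed in three stages: build a complete $\mathfrak{F}$-cohomology theory, construct the long exact sequence, and use it to compare the resulting dimensions.

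For the first stage, I would define $\widehat{\FExt}^{*}_{RG}(M,N)$ using $\mathfrak{F}$-split acyclic $\ZZ$-graded complexes of $\mathfrak{F}$-projectives that remain acyclic upon applying $\Hom_{RG}(-,Q)$ for every $\mathfrak{F}$-projective $Q$.  Exactly as in Mislin's absolute case, a dimension-shifting argument shows that $\widehat{\FExt}^{n}(M,N)=0$ for all $n$ and $N$ whenever $\Fpd M<\infty$, since the negative part of any complete $\mathfrak{F}$-resolution is then contractible.  Next, call a module Gorenstein $\mathfrak{F}$-projective if it is a cokernel in a complete $\mathfrak{F}$-resolution, let $\FGpd$ and $\FGExt^{*}$ denote the associated dimension and derived functor, and construct, for any $M$ with $\FGpd M<\infty$, the long exact sequence
\[ \cdots\to \FGExt^{n}(M,N)\to \FExt^{n}(M,N)\to \widehat{\FExt}^{n}(M,N)\to \FGExt^{n+1}(M,N)\to\cdots \]
by the standard mapping-cone comparison between a proper Gorenstein $\mathfrak{F}$-projective resolution of $M$ and an $\mathfrak{F}$-projective resolution.

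Specialising to $M=R$, the vanishing from the first stage forces $\FExt^{n}(R,N)\cong \FGExt^{n}(R,N)$ for all $n$ and $N$, so $\Fcd G=\FGcd G$.  To identify $\FGcd G$ with $\Gcd G$: every $\mathfrak{F}$-projective is Gorenstein projective by \cite[Lemma 2.21]{AsadollahiBahlekehSalarian-HierachyCohomologicalDimensions}, so a complete $\mathfrak{F}$-resolution is in particular a strong complete resolution of $RG$-modules, yielding $\Gcd G\le \FGcd G$; the reverse inequality requires splicing a length-$\Gcd G$ Gorenstein projective resolution of $R$ with a complete resolution of its top kernel and verifying that the splice can be arranged to be $\mathfrak{F}$-split.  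The principal obstacle I expect is executing the Avramov--Martsinkovsky lifting argument inside the $\mathfrak{F}$-relative category, where the only allowable short exact sequences are the $\mathfrak{F}$-split ones, so that every comparison of resolutions producing a connecting map must be set up and checked in that restricted derived setting rather than in the absolute one.
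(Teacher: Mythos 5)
Your overall strategy matches the paper's: build a complete $\mathfrak{F}$-cohomology, define a relative-Gorenstein theory with $\mathfrak{F}_G$-projectives (cokernels in $\mathfrak{F}$-strong $\mathfrak{F}$-complete resolutions), construct an Avramov--Martsinkovsky long exact sequence
\[ \cdots \longrightarrow \FGH^n(G,-) \longrightarrow \FH^n(G,-) \longrightarrow \FhatH^n(G,-) \longrightarrow \FGH^{n+1}(G,-) \longrightarrow \cdots \]
and exploit that $\FhatH^n(G,-)=0$ when $\Fcd G<\infty$. Up to this point your plan is sound and agrees with the paper's Section~3; the LES then yields $\FGH^n(G,-)\cong\FH^n(G,-)$ for $n\ge 1$, so ``$\Fcd G=\FGcd G$'' is correct.

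Where you diverge, and where the gap lies, is in the last stage. You propose to finish by proving $\FGcd G\le\Gcd G$ directly, ``splicing a length-$\Gcd G$ Gorenstein projective resolution with a complete resolution of its top kernel and verifying the splice is $\mathfrak{F}$-split.'' This does not obviously go through: an $\mathfrak{F}_G$-projective is a cokernel in an $\mathfrak{F}$-strong $\mathfrak{F}$-complete resolution, which is a \emph{different} class from the Gorenstein projectives (the modules in the resolution are $\mathfrak{F}$-projective, not projective), and $\mathfrak{F}_G$-properness ($\Hom_{RG}(Q,-)$ exact for $\mathfrak{F}_G$-projective $Q$) is likewise a different condition from G-properness. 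It is not clear how to convert a length-$n$ G-proper Gorenstein projective resolution of $R$ into a length-$n$ $\mathfrak{F}_G$-proper $\mathfrak{F}_G$-projective resolution, and you give no argument for it. Relatedly, your claim that ``a complete $\mathfrak{F}$-resolution is in particular a strong complete resolution'' is false: a strong complete resolution must consist of genuine projectives, whereas a complete $\mathfrak{F}$-resolution consists of $\mathfrak{F}$-projectives, which are only summands of modules of the form $N\otimes_R R\Delta$. (The easy inequality $\Gcd G\le\FGcd G$ is recoverable anyway from $\Gcd G\le\Fcd G=\FGcd G$, so this error is not fatal; the real gap is the reverse inequality.)

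The paper sidesteps exactly this difficulty. Instead of comparing $\FGcd G$ with $\Gcd G$, it constructs (Proposition~\ref{prop:AM seqs diagram}) a commutative ladder linking the new $\mathfrak{F}$-relative AM sequence with the ordinary Avramov--Martsinkovsky sequence in Gorenstein cohomology, including a diagonal map $\eta_n\colon\GH^n(G,-)\to\FH^n(G,-)$ through which the comparison map $\FGH^n(G,-)\to\FH^n(G,-)$ factors. Since $\GH^i(G,-)=0$ for $i>\Gcd G$, this factorisation forces $\FGH^i(G,-)\to\FH^i(G,-)$ to be zero, hence $\FH^i(G,-)$ injects into $\FhatH^i(G,-)=0$ for $i>\Gcd G$, giving $\Fcd G\le\Gcd G$. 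To repair your proof you would need either to supply the missing comparison of $\mathfrak{F}_G$-proper resolutions with G-proper resolutions --- which is not at all evidently possible --- or to import some version of the paper's diagram-chase.
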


We don't know if $\Gcd G < \infty$ implies $\Fcd G < \infty$, although if $\Gcd G = 0$ or $1$ then $\Gcd G = \Fcd G = \ucd G$ \cite[Proposition 2.19]{AsadollahiBahlekehSalarian-HierachyCohomologicalDimensions}\cite[Theorem 3.6]{BahlekehDembegiotiTalelli-GorensteinDimensionAndProperActions}.  Additionally if $G$ is in Kropholler's class $\HF$ and has a bound on the orders of its finite subgroups then $\Fcd G = \Gcd G$ (see Example \ref{example:HF}).

Generalising a construction of Avramov--Martsinkovsky, Asadollahi--Bahlekeh--Salarian showed that if $\Gcd G < \infty$ then there is a long exact sequence of cohomological functors relating the group cohomology, the complete cohomology and the Gorenstein cohomology \cite{AvramovMartsinkovsky-AbsoluteRelativeAndTateCohomology-FiniteGorenstein,AsadollahiBahlekehSalarian-HierachyCohomologicalDimensions}.  Our result follows from constructing a similar long exact sequence relating the $\mathfrak{F}$-cohomology, the complete $\mathfrak{F}$-cohomology (defined in Section \ref{subsection:complete F cohomology}) and a new cohomology theory we call the $\mathfrak{F}_G$-cohomology defined in Section \ref{section:FGcohomology}.  These two long exact sequences fit into a commutative diagram, see Proposition \ref{prop:AM seqs diagram}.  It appears that the requirement in Theorem \ref{thm:Fcd finite then Fcd=Gcd} that $\Fcd G < \infty$ will be difficult to circumvent since this new long exact sequence cannot be constructed for all groups.

In Section \ref{section:Group Extensions} we use that the Gorenstein cohomological dimension is subadditive to improve upon a result of Degrijse on the behaviour of $\Fcd$ under group extensions \cite[Theorem B]{Degrijse-ProperActionsAndMackeyFunctors}.  Degrijse phrased his result in terms of Bredon cohomological dimension of $G$ with coefficients restricted to cohomological Mackey functors, but this invariant is equal to $\Fcd G$ \cite[Theorem 6.2]{Me-CohomologicalMackey}.

\theoremstyle{plain}\newtheorem*{CustomThmC*}{Corollary \ref{cor:Fcd subadditive if finite}}
\begin{CustomThmC*}
Given a short exact sequence of groups
\[ 1 \longrightarrow N \longrightarrow G \longrightarrow Q \longrightarrow 1  \]
if $\Fcd G < \infty$ then $ \Fcd G \le \Fcd N + \Fcd Q $.
\end{CustomThmC*}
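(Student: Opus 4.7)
The plan is to reduce the statement to the corresponding subadditivity of the Gorenstein cohomological dimension and then invoke Theorem~\ref{thm:Fcd finite then Fcd=Gcd} to translate back. If $\Fcd N = \infty$ or $\Fcd Q = \infty$, the inequality $\Fcd G \le \Fcd N + \Fcd Q$ is vacuous, so I may assume both are finite.

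Under this assumption, Theorem~\ref{thm:Fcd finite then Fcd=Gcd} applies to each of $G$, $N$, and $Q$ and delivers the equalities $\Fcd G = \Gcd G$, $\Fcd N = \Gcd N$, and $\Fcd Q = \Gcd Q$. The next step is to appeal to subadditivity of the Gorenstein cohomological dimension under group extensions, $\Gcd G \le \Gcd N + \Gcd Q$—this is the input advertised in the introduction to Section~\ref{section:Group Extensions}. Chaining these equalities with that inequality yields
\[ \Fcd G = \Gcd G \le \Gcd N + \Gcd Q = \Fcd N + \Fcd Q, \]
which is the desired bound.

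The real content of the argument lies in the subadditivity of $\Gcd$ itself. I would expect this to be established by taking a Gorenstein projective resolution of $R$ over $RQ$ of length $\Gcd Q$, inflating it to $RG$, and splicing it with a Gorenstein projective resolution of $R$ over $RN$ of length $\Gcd N$ induced up to $RG$, in the style of a Lyndon--Hochschild--Serre double complex. The main obstacle is checking that the relevant induction and inflation operations preserve Gorenstein projectivity, or at least produce a total complex that is a Gorenstein projective resolution of $R$ of the expected length; in the Gorenstein setting this is more delicate than the elementary argument for ordinary projectives and typically requires working directly with complete resolutions. Once this subadditivity is in hand, the corollary follows from Theorem~\ref{thm:Fcd finite then Fcd=Gcd} with essentially no further work, which is why it is stated as a corollary rather than an independent result.
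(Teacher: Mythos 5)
Your proposal is correct and follows essentially the same route as the paper: reduce to subadditivity of $\Gcd$ and translate via Theorem~\ref{thm:Fcd finite then Fcd=Gcd}. Note that the subadditivity of $\Gcd$ under extensions, which you speculate about proving via a Lyndon--Hochschild--Serre style argument, is simply cited in the paper from Bahlekeh--Dembegioti--Talelli \cite[Remark 2.9(2)]{BahlekehDembegiotiTalelli-GorensteinDimensionAndProperActions}, so no new argument is required there; also, the paper can avoid your reduction to the case $\Fcd N, \Fcd Q < \infty$ by using only the general inequality $\Gcd \le \Fcd$ for $N$ and $Q$ rather than the equality.
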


In Section \ref{section:cdQ} we use the Avramov--Martsinkovsky long exact sequence to prove the following.
\theoremstyle{plain}\newtheorem*{CustomThmD*}{Proposition \ref{prop:cdQG finite then cdQG le GcdG}}
\begin{CustomThmD*}
 If $\Gcd G < \infty$ and $\cd_{\QQ} G < \infty$ then $\cd_{\QQ} G \le \Gcd G$.
\end{CustomThmD*}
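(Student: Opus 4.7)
The plan is to apply the generalised Avramov--Martsinkovsky long exact sequence---constructed by Asadollahi--Bahlekeh--Salarian in \cite{AsadollahiBahlekehSalarian-HierachyCohomologicalDimensions} and already cited in the introduction---to the trivial module over $\QQ G$. Taking $R = \QQ$, the hypothesis $\Gcd G = d < \infty$ guarantees that $\QQ$ admits a Gorenstein projective resolution of length $d$ over $\QQ G$, so the machinery applies. The output will be, for every $\QQ G$-module $N$, a long exact sequence
\[  \cdots \to \GExt^n_{\QQ G}(\QQ, N) \to H^n(G, N) \to \widehat{H}^n(G, N) \to \GExt^{n+1}_{\QQ G}(\QQ, N) \to \cdots  \]
where $\widehat{H}^*$ denotes the complete cohomology $\widehat{\Ext}^*_{\QQ G}(\QQ, -)$.

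Since $\Gpd_{\QQ G}(\QQ) = d$, the Gorenstein Ext terms vanish for $n > d$, so I would read off from the sequence that $H^n(G, N) \cong \widehat{H}^n(G, N)$ for all $n > d$ and every $N$. The second hypothesis, $\cd_\QQ G < \infty$, says that $\QQ$ has finite projective dimension over $\QQ G$; this forces $\widehat{\Ext}^*_{\QQ G}(\QQ, -) \equiv 0$ by the standard vanishing of complete derived functors on modules of finite projective dimension (a finite projective resolution trivialises the negative tail of any strong complete resolution, and a dimension-shift does the rest). Combining these two observations gives $H^n(G, N) = 0$ for every $n > d$ and every $N$, which is exactly $\cd_\QQ G \le d = \Gcd G$.

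The main work is really in the preceding sections of the paper, where the various Avramov--Martsinkovsky sequences are set up; granted that, the argument above is essentially mechanical. The one subtlety I anticipate is the interpretation of $\Gcd G$: if it is taken over $\QQ$, the above is complete, but if over some other ring $R$ then a comparison $\Gcd_{\QQ} G \le \Gcd_R G$ must be supplied, for instance via a flat base-change argument showing that $\QQ \otimes_R -$ sends a strong complete resolution over $RG$ to one over $\QQ G$, preserving the Gorenstein projective dimension bound, thereby reducing to the $\QQ$-coefficient case. This is the one step where care is required; everything else follows directly from properties of the long exact sequence already in hand.
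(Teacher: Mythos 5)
Your approach is correct and is, at its core, the same argument the paper gives, just organised differently. You base-change to $\QQ G$ first, build the Avramov--Martsinkovsky long exact sequence over $\QQ G$, and kill the complete cohomology term using $\cd_{\QQ} G < \infty$; the paper instead keeps the AM sequence over $\ZZ G$, tensors the whole thing with $\QQ$ (flatness preserves exactness), and then identifies the resulting terms with $\Ext^*_{\QQ G}(\QQ,-)$ and $\widehat{\Ext}^*_{\QQ G}(\QQ,-)$ before invoking the same vanishing. In both formulations the engine is identical: AM sequence plus vanishing of complete $\QQ G$-cohomology when $\cd_\QQ G$ is finite.

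The one place you correctly flag as delicate — the comparison $\Gcd_{\QQ G}\QQ \le \Gcd_{\ZZ G}\ZZ$ via flat base change — is precisely where the paper spends its effort, and it is not quite as routine as ``tensoring a strong complete resolution gives a strong complete resolution.'' The issue is strength: if $T_*$ is a strong complete resolution over $\ZZ G$ and $Q$ is a projective $\QQ G$-module, then $\Hom_{\QQ G}(T_*\otimes\QQ, Q) \cong \Hom_{\ZZ G}(T_*, Q)$, but $Q$ need not be $\ZZ G$-projective, so acyclicity does not follow directly from $T_*$ being strong over $\ZZ G$. The paper gets around this with a detour: Lemma 5.2 shows $\silp \QQ G \le \silp \ZZ G$ (using the Emmanouil and Gedrich--Gruenberg results on $\silp = \spli$), and then a result of Mislin--Talelli says that when $\silp$ is finite every weak complete resolution is automatically strong. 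This is the content of the paper's Lemma 5.3, which packages the base change as the natural isomorphism $\widehat{H}^*(G,M)\otimes\QQ \cong \widehat{\Ext}^*_{\QQ G}(\QQ,M)$. So your proposal would be complete once you replace ``flat base-change \ldots thereby reducing to the $\QQ$-coefficient case'' with this $\silp$/$\spli$ argument; without it the strength of the tensored resolution is a genuine gap, albeit a fillable and anticipated one.
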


\begin{Acknowledgements*}
 The author would like to thank his supervisor Brita Nucinkis for all her advice and support.
\end{Acknowledgements*}

\section{Preliminaries}\label{section:preliminaries}

\subsection{Complete Resolutions and Complete Cohomology}\label{subsection:complete resolutions}

A \emph{weak complete resolution} of a module $M$ is an acyclic resolution $T_*$ of projective modules which coincides with an ordinary projective resolution $P_*$ of $M$ in sufficiently high degree.  The degree in which the two coincide is called the \emph{coincidence index}.  A weak complete resolution is called a \emph{strong complete resolution} if $\Hom_{RG} (T_*, Q)$ is acyclic for every projective module $Q$.  We avoid the term ``complete resolution'' since some authors use it to refer to a weak complete resolution and others to a strong complete resolution.  

\begin{Prop}\cite[Proposition 2.8]{AsadollahiBahlekehSalarian-HierachyCohomologicalDimensions}
 A group $G$ admits a strong complete resolution if and only if $\Gcd G < \infty$ 
\end{Prop}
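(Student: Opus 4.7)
My plan is to prove the two implications directly from the definitions of strong complete resolution and Gorenstein projectivity, together with the equivalence recalled earlier that $\Gpd M \le n$ if and only if $M$ admits a complete resolution of coincidence index $n$.

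For $(\Leftarrow)$, assume $T_*$ is a strong complete resolution of $R$ with coincidence index $n$. The $n$-th syzygy $K \coloneqq \ker(T_{n-1} \to T_{n-2})$ appears as a cokernel in the shifted complex $T_*[n]$, which is again a strong complete resolution of projectives exhibiting $K$ as a cokernel. Hence $K$ is Gorenstein projective by definition, and
\[ 0 \longrightarrow K \longrightarrow T_{n-1} \longrightarrow \cdots \longrightarrow T_0 \longrightarrow R \longrightarrow 0 \]
is a Gorenstein projective resolution of $R$, so $\Gcd G = \Gpd R \le n < \infty$.

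For $(\Rightarrow)$, assume $\Gpd R = n < \infty$. I would construct a strong complete resolution by splicing. The first step is to show that $R$ admits a projective resolution whose $n$-th syzygy $K$ is Gorenstein projective; this follows from the closure of the Gorenstein projective class under syzygies (for modules of finite Gorenstein projective dimension), combined with a Schanuel-type comparison of syzygies from different projective resolutions. Next, by definition of Gorenstein projectivity, $K$ sits as the cokernel at some position of a strong complete resolution $Q_*$ of projectives. Using the horseshoe lemma one may arrange the projective resolution $P_* \to R$ so that its tail $\cdots \to P_{n+1} \to P_n \to K \to 0$ coincides with the non-negative part of $Q_*$. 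Defining $T_* \coloneqq Q_*[n]$ yields a doubly infinite acyclic complex of projectives coinciding with $P_*$ in degrees $\ge n$, and $\Hom_{RG}(T_*, P)$ remains acyclic for every projective $P$ because shifting commutes with $\Hom(-, P)$ up to a degree shift. Hence $T_*$ is the required strong complete resolution of $R$.

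The main obstacle is the first step of $(\Rightarrow)$: producing a projective resolution of $R$ in which the $n$-th syzygy is Gorenstein projective. Equivalently, this is the step that upgrades the (possibly weak) complete resolution supplied by the earlier equivalence to a strong one. Once this is available the splicing construction is essentially formal, since a degree shift preserves all the axioms of a strong complete resolution.
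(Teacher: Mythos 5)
This proposition is quoted from Asadollahi--Bahlekeh--Salarian and is not re-proved in the paper, so there is no in-paper argument to compare against; I can only assess your proposal on its own terms.

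Your backward implication (strong complete resolution $\Rightarrow \Gcd G < \infty$) contains a genuine error. Setting $K := \ker(T_{n-1}\to T_{n-2})$ is fine, and identifying $K$ with a cokernel in $T_*$ correctly shows $K$ is Gorenstein projective. But
\[
0 \longrightarrow K \longrightarrow T_{n-1} \longrightarrow \cdots \longrightarrow T_0 \longrightarrow R \longrightarrow 0
\]
is not a resolution of $R$: the acyclic doubly infinite complex $T_*$ carries no augmentation to $R$, and even if one composes with the surjection $A = \mathrm{coker}(T_1 \to T_0) \twoheadrightarrow R$, the resulting complex fails to be exact at $T_0$. The repair is to work with the projective resolution $P_*$ of $R$ that $T_*$ coincides with in degrees $\ge n$. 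Since $\theta_* : T_* \to P_*$ is an isomorphism in degrees $n$ and $n+1$, one gets $K \cong \mathrm{coker}(\partial_{n+1}^T) \cong \mathrm{coker}(\partial_{n+1}^P) \cong \mathrm{im}(\partial_n^P)$, the $n$-th syzygy of $P_*$, and then
\[
0 \longrightarrow K \longrightarrow P_{n-1} \longrightarrow \cdots \longrightarrow P_0 \longrightarrow R \longrightarrow 0
\]
is the required length-$n$ resolution by Gorenstein projectives.

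The forward implication has the right shape, and you correctly locate the crux: that $\Gpd R = n$ forces the $n$-th syzygy of any projective resolution of $R$ to be Gorenstein projective (Holm's closure and Schanuel-type comparison), which is exactly what upgrades a weak complete resolution to a strong one. However, the horseshoe lemma is the wrong tool at the splicing step---there is no short exact sequence of modules being resolved. One simply replaces the tail of $P_*$ outright: gluing the non-negative part of $Q_*$ (a projective resolution of $K$) onto $P_{n-1} \to \cdots \to P_0 \to R$ yields a new projective resolution $P'_*$ of $R$, and then $T_* := Q_*[n]$ coincides with $P'_*$ in degrees $\ge n$, while $\Hom_{RG}(T_*,Q)$-acyclicity is inherited from $Q_*$ by the shift. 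With those two repairs the argument goes through.
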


The advantage of strong complete resolutions is that given strong complete resolutions $T_*$ and $S_*$ of module $M$ and $N$, any module homomorphism $M \to N$ lifts to a morphism of strong complete resolutions $T_* \to S_*$ \cite[Lemma 2.4]{CornickKropholler-OnCompleteResolutions}.  Thus they can be used to define a cohomology theory: given a strong complete resolution $T_*$ of $M$ we define $$\widehat{\Ext}^*_{RG}(M,-) \cong H^*\Hom_{RG}(T_*, -)$$
We also set $\widehat{H}^*(G, -) = \widehat{\Ext}^*_{RG}(R, -)$.   This coincides with the complete cohomology of Mislin \cite{Mislin-TateViaSatellites}, Vogel \cite{Goichot-HomologieDeTateVogel}, and Benson--Carlson \cite{BensonCarlson-ProductsInNegativeCohomology} (see \cite[Theorem 1.2]{CornickKropholler-OnCompleteResolutions} for a proof).  Recall that the complete cohomology is itself a generalisation of the Farrell--Tate Cohomology, defined only for groups with finite virtual cohomological dimension \cite[\S X]{Brown}.

Even weak complete resolutions do not always exist, for example a free Abelian group of infinite rank cannot admit a weak complete resolution \cite[Corollary 2.10]{MislinTalelli-FreelyProperlyOnFDHomotopySpheres}.  It is conjectured by Dembegioti and Talelli that a $\ZZ G$-module admits a weak complete resolution if and only if it admits a strong complete resolution \cite[Conjecture B]{DembegiotiTalelli-ANoteOnCompleteResolutions}.

\subsection{\texorpdfstring{$\mathfrak{F}$}{F}-Cohomology}
This section contains two technical lemmas we will need later.  

If $M$ is any $RG$-module and $F_i = R \Delta^i$ is the standard $\mathfrak{F}$-split resolution of $R$ \cite[p.342]{Nucinkis-EasyAlgebraicCharacterisationOfUniversalProperGSpaces}, then $F_* \otimes_{R} M$ is an $\mathfrak{F}$-split $\mathfrak{F}$-projective resolution of $M$.  Thus we've shown:

\begin{Lemma}\label{lemma:Fsplit resolutions exist}
 $\mathfrak{F}$-split $\mathfrak{F}$-projective resolutions exist for all $RG$-modules $M$.
\end{Lemma}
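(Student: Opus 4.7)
My plan is to make explicit the verification already sketched in the paragraph preceding the statement. I would start from Nucinkis's standard $\mathfrak{F}$-split $\mathfrak{F}$-projective resolution $F_* \to R$ of the trivial module, whose terms are tensor powers of $R\Delta$ with the diagonal $G$-action, and whose $\mathfrak{F}$-projectivity follows from the evident factorisation of each term as $N \otimes_R R\Delta$ required by the definition.

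To produce an $\mathfrak{F}$-split $\mathfrak{F}$-projective resolution of an arbitrary $RG$-module $M$, I would tensor $F_* \to R$ over $R$ with $M$, using the diagonal $G$-action on tensor products, and verify three properties of $F_* \otimes_R M \to M$. For $\mathfrak{F}$-projectivity of each term, pulling off a copy of $R\Delta$ displays $F_i \otimes_R M$ explicitly in the form $N \otimes_R R\Delta$. For exactness, since the trivial subgroup is finite the $\mathfrak{F}$-split resolution $F_* \to R$ is in particular $R$-split exact, and tensoring any $R$-split exact complex with an $R$-module over $R$ preserves exactness. Finally, for $\mathfrak{F}$-splitness I would use the equivalent characterisation given in the introduction---that an exact sequence is $\mathfrak{F}$-split if and only if it remains split after applying $-\otimes_R R\Delta$---together with the obvious isomorphism
\[ (F_* \otimes_R M) \otimes_R R\Delta \cong (F_* \otimes_R R\Delta) \otimes_R M; \]
the right-hand side is a split exact complex by $\mathfrak{F}$-splitness of $F_* \to R$, so the left-hand side is too.

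I do not anticipate any real obstacle: the lemma is essentially formal once the equivalent descriptions of $\mathfrak{F}$-projectives and $\mathfrak{F}$-split sequences recorded in the introduction are in hand. The only minor care needed is the bookkeeping of diagonal $G$-actions under the various tensor reassociations, which is entirely routine.
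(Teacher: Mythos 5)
Your proposal is correct and follows the same route as the paper: the paper simply asserts that $F_* \otimes_R M$ is an $\mathfrak{F}$-split $\mathfrak{F}$-projective resolution of $M$, and you supply the (accurate) verifications of $\mathfrak{F}$-projectivity, exactness, and $\mathfrak{F}$-splitness. Nothing is missing; the observation that $\mathfrak{F}$-splitness implies $R$-splitness (since $\{1\}\in\mathfrak{F}$) is exactly the point needed to get exactness of the tensored complex.
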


There is also a version of the Horseshoe Lemma, proved as in \cite[Lemma 8.2.1]{EnochsJenda-RelativeHomologicalAlgebra1}.
\begin{Lemma}[Horseshoe Lemma]\label{lemma:Fcohomology horseshoe}
 If 
\[0 \longrightarrow A \longrightarrow B \longrightarrow C \longrightarrow 0 \]
is an $\mathfrak{F}$-split short exact sequence and $P_*$ and $Q_*$ are $\mathfrak{F}$-split $\mathfrak{F}$-projective resolutions of $A$ and $C$ respectively then there is an $\mathfrak{F}$-split $\mathfrak{F}$-projective resolution $S_*$ of $B$ such that $S_i = P_i \oplus Q_i$ and there is an $\mathfrak{F}$-split short exact sequence of augmented complexes 
\[0 \longrightarrow \tilde P_* \longrightarrow \tilde S_* \longrightarrow \tilde Q_* \longrightarrow 0 \]
\end{Lemma}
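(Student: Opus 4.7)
The plan is to adapt the classical proof of the Horseshoe Lemma \cite[Lemma 8.2.1]{EnochsJenda-RelativeHomologicalAlgebra1}, building $S_*$ by induction on degree. The two places where the $\mathfrak{F}$-relative setting intervenes are: first, each augmentation is constructed by lifting against an $\mathfrak{F}$-split surjection, using $\mathfrak{F}$-projectivity in place of projectivity; and second, the induced short exact sequence of kernels must be shown to remain $\mathfrak{F}$-split so that the induction continues. Writing $\alpha : A \hookrightarrow B$ and $\beta : B \twoheadrightarrow C$, for the base case the $\mathfrak{F}$-projectivity of $Q_0$ combined with the $\mathfrak{F}$-splitness of $0 \to A \to B \to C \to 0$ lets me lift $\varepsilon_C : Q_0 \twoheadrightarrow C$ to a map $\tilde\varepsilon : Q_0 \to B$. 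Setting $S_0 = P_0 \oplus Q_0$ with augmentation $\varepsilon_B(p, q) = \alpha(\varepsilon_A(p)) + \tilde\varepsilon(q)$, a standard nine-lemma diagram chase yields the desired kernel sequence $0 \to K_A \to K_B \to K_C \to 0$ and shows that the middle row $0 \to P_0 \to S_0 \to Q_0 \to 0$ is $RG$-split as a direct sum.

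The main obstacle is verifying that the kernel sequence is $\mathfrak{F}$-split. Fix a finite subgroup $H \le G$. Since $0 \to K_A \to P_0 \to A \to 0$ is part of the given $\mathfrak{F}$-split resolution, it admits an $RH$-section $\lambda : A|_H \to P_0|_H$. For $k \in K_C|_H$, commutativity gives $\beta(\tilde\varepsilon(k)) = \varepsilon_C(k) = 0$, so $\tilde\varepsilon(k) \in \alpha(A)$; denote its unique $\alpha$-preimage by $a(k) \in A|_H$. Then $\phi(k) = (-\lambda(a(k)),\, k)$ defines an $RH$-module map $K_C|_H \to S_0|_H$ whose image lies in $K_B|_H$ (a direct computation gives $\varepsilon_B(\phi(k)) = -\alpha(a(k)) + \tilde\varepsilon(k) = 0$) and which sections the canonical projection $K_B|_H \to K_C|_H$. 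This exhibits the required $\mathfrak{F}$-splitting.

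Finally, the construction iterates: replacing $(A, B, C)$ by $(K_A, K_B, K_C)$ and the original resolutions by their tails $P_{\ge 1}, Q_{\ge 1}$ places us in exactly the same situation one degree higher, so the induction produces $S_*$ together with the $\mathfrak{F}$-split short exact sequence $0 \to \tilde P_* \to \tilde S_* \to \tilde Q_* \to 0$ of augmented complexes.
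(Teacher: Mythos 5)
The paper proves this lemma by a bare citation to \cite[Lemma~8.2.1]{EnochsJenda-RelativeHomologicalAlgebra1}, so your write-up is essentially an unpacking of that reference, and most of it is sound: the lift of $\varepsilon_C$ through $\beta$ using $\mathfrak{F}$-projectivity of $Q_0$ is the right move, and your explicit $RH$-section $\phi(k)=(-\lambda(a(k)),\,k)$ of the kernel sequence $0\to K_A\to K_B\to K_C\to 0$ is correct and checks out.

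However, there is a genuine gap. You say the $\mathfrak{F}$-relative setting intervenes in only two places, but there is a third that you never address: you must also show that the \emph{vertical} sequence $0\to K_B\to S_0 \to B\to 0$ is $\mathfrak{F}$-split. Iterated over the syzygies, this is precisely what makes the constructed $S_*$ an $\mathfrak{F}$-\emph{split} resolution of $B$, which is part of the lemma's conclusion; the $\mathfrak{F}$-splitness of the \emph{horizontal} kernel sequence keeps the induction running, but by itself it does not give the contractibility of $\tilde S_*$ over finite subgroups. The missing claim is true and can be proved in the same hands-on style you used: fixing a finite subgroup $H$ and $RH$-sections $\sigma_A$ of $\varepsilon_A$ and $\sigma_C$ of $\varepsilon_C$, the map $b\mapsto\bigl(\sigma_A\alpha^{-1}(b-\tilde\varepsilon\sigma_C\beta(b)),\,\sigma_C\beta(b)\bigr)$ is a well-defined $RH$-section of $\varepsilon_B$ (well-defined because $\beta\bigl(b-\tilde\varepsilon\sigma_C\beta(b)\bigr)=0$, so the argument of $\alpha^{-1}$ lies in $\operatorname{im}\alpha$). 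Alternatively, since the paper records that $\mathfrak{F}$-split short exact sequences form an allowable class in the sense of Mac Lane, one can appeal to the general $3\times 3$ argument for such classes. Either way, this verification needs to appear; without it the induction only produces a resolution of $B$ by $\mathfrak{F}$-projectives, not an $\mathfrak{F}$-split one.
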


\subsection{Complete \texorpdfstring{$\mathfrak{F}$}{F}-cohomology}\label{subsection:complete F cohomology}

Nucinkis constructs a complete $\mathfrak{F}$-cohomology in \cite{Nucinkis-CohomologyRelativeGSet}, we give a brief outline here.  An \emph{$\mathfrak{F}$-complete resolution} $T_*$ of $M$ is an acyclic $\mathfrak{F}$-split complex of $\mathfrak{F}$-projectives which coincides with an $\mathfrak{F}$-split $\mathfrak{F}$-projective resolution of $M$ in high enough dimensions.  
An \emph{$\mathfrak{F}$-strong} $\mathfrak{F}$-complete resolution $T_*$ has $\Hom_{RG}(T_*, Q)$ exact for all $\mathfrak{F}$-projectives $Q$.  
Given such a $T_*$ we define
\[ \widehat{\mathfrak{F}\negthinspace\Ext}_{RG}^*(M,-) = H^* \Hom_{RG}(T_*, -) \]
\[ \FhatH^*(G, -) = \widehat{\mathfrak{F}\negthinspace\Ext}_{RG}^*(R,-) \]
Nucinkis also describes a Mislin style construction and a Benson--Carlson construction of complete $\mathfrak{F}$-cohomology defined for all groups, proves they are equivalent, and proves that whenever there exists an $\mathfrak{F}$-complete resolution they agree with the definition above.

\subsection{Gorenstein Cohomology}\label{subsection:Gorenstein cohomology}
The Gorenstein cohomology is, like the $\mathfrak{F}$-cohomology, a special case of the relative homology of Mac Lane \cite{MacLane-Homology} and Eilenberg--Moore \cite{EilenbergMoore-FoundationsOfRelativeHomologicalAlgebra}.

Recall that a module is Gorenstein projective if it is a cokernel in a strong complete resolution.  
An acyclic complex $C_*$ of Gorenstein projective modules is \emph{G-proper} if $\Hom_{R G}(Q, C_*)$ is exact for every Gorenstein projective $Q$.  The class of G-proper short exact sequences is allowable in the sense of Mac Lane \cite[\S IX.4]{MacLane-Homology}.  The projectives objects with respect to G-proper short exact sequences are exactly the Gorenstein projectives.  For $M$ and $N$ any $RG$-modules, we define
\[ \GExt_{RG}^*(M, N) = H^*\Hom_{RG} (P_*, N) \]
\[ \GH^*(G, N) = \GExt_{RG}^*(R, N) \]
Where $P_*$ is a G-proper resolution of $M$ by Gorenstein projectives.  

The usual method of producing a ``Gorenstein projective dimension'' of a module $M$ in this setting would be to look at the shortest length of a G-proper resolution of $M$ by Gorenstein projectives.  A priori this could be larger than the Gorenstein projective dimension defined in the introduction, where the G-proper condition is not required.  Fortunately there is the following theorem of Holm:

\begin{Theorem}\label{theorem:Holms theorem on proper vs non proper res}\cite[Theorem 2.10]{Holm-GorensteinHomologicalDimensions}
 If $M$ has finite Gorenstein projective dimension then $M$ admits a G-proper Gorenstein projective resolution of length $\Gpd M$.
\end{Theorem}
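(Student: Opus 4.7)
The plan is to follow an Auslander--Buchweitz style approximation strategy. The key reduction is to establish that if $\Gpd M = n < \infty$, there exists a short exact sequence
\[ 0 \longrightarrow L \longrightarrow G \longrightarrow M \longrightarrow 0 \]
with $G$ Gorenstein projective and $L$ of ordinary projective dimension at most $n-1$ (with the understanding that $L = 0$ when $n = 0$). Granted this, splicing a length $n-1$ projective resolution $0 \to P_{n-1} \to \cdots \to P_0 \to L \to 0$ onto $G \to M$ produces a Gorenstein projective resolution of $M$
\[ 0 \longrightarrow P_{n-1} \longrightarrow \cdots \longrightarrow P_0 \longrightarrow G \longrightarrow M \longrightarrow 0 \]
of length exactly $n$.

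I would construct the approximation sequence by iterating pushouts along the defining embeddings of Gorenstein projectives. Begin with any Gorenstein projective resolution $0 \to G_n \to \cdots \to G_0 \to M \to 0$. Each $G_i$ admits a short exact sequence $0 \to G_i \to Q_i \to G_i' \to 0$ with $Q_i$ projective and $G_i'$ Gorenstein projective, coming from the positive half of the strong complete resolution of $G_i$. Working from the top of the resolution downward, push out $G_i \hookrightarrow G_{i-1}$ along $G_i \hookrightarrow Q_i$: the new object at position $i-1$ is an extension of the old one by $G_i'$ and hence remains Gorenstein projective, while a projective is installed at position $i$. After $n$ such pushouts the tail $G_n, \ldots, G_1$ has been replaced by projectives, and the Gorenstein projective $G$ left above $M$ yields the desired short exact sequence, with $L$ the new first syzygy of $M$.

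Finally, G-properness of the spliced resolution is essentially automatic. All of its kernels have projective dimension strictly less than $n$, being either $L$ itself or syzygies appearing inside the projective resolution of $L$, and $\Ext^{\ge 1}_{RG}(Q,-)$ vanishes on modules of finite projective dimension for every Gorenstein projective $Q$ (by dimension shifting against a finite projective resolution, using $\Ext^{\ge 1}_{RG}(Q,P) = 0$ for $Q$ Gorenstein projective and $P$ projective, itself a consequence of the acyclicity of $\Hom_{RG}(T_*,P)$ for the strong complete resolution $T_*$ of $Q$). The main obstacle is the pushout iteration in the middle step: one must check that at each stage the result remains an exact Gorenstein projective resolution of $M$ of the expected form, and that the accumulated kernel $L$ truly has projective dimension at most $n-1$ rather than merely finite projective dimension. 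This careful bookkeeping is essentially the content of Auslander--Buchweitz theory transplanted to the Gorenstein setting.
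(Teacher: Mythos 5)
This is a cited result in the paper — Theorem \ref{theorem:Holms theorem on proper vs non proper res} appears only as a reference to Holm, with no proof given — so the comparison is really against Holm's argument. Your proposal is a correct reconstruction of it: reduce to the Auslander--Buchweitz approximation $0 \to L \to G \to M \to 0$ with $G$ Gorenstein projective and $\pd L \le n-1$, splice in a projective resolution of $L$, and deduce G-properness from the vanishing of $\FExt$ into modules of finite projective dimension. A couple of small points of polish: the arrows $G_i \hookrightarrow G_{i-1}$ are not injections for $i < n$ (the map $G_i \to G_{i-1}$ has the $i$-th syzygy as kernel), but this is harmless since the pushout of $G_{i-1} \leftarrow \tilde G_i \hookrightarrow Q_i$ only needs $\tilde G_i \hookrightarrow Q_i$ to be a monomorphism to guarantee that $G_{i-1}$ embeds in the pushout with Gorenstein projective cokernel; and the ``main obstacle'' you flag at the end is in fact unobstructed — at each stage one checks directly that the modified complex stays exact (a short diagram chase in the pushout square), the new term is an extension of a Gorenstein projective by a Gorenstein projective hence Gorenstein projective, and after all $n$ pushouts the accumulated kernel $L$ admits the explicit projective resolution $0 \to Q_n \to \cdots \to Q_1 \to L \to 0$, which gives $\pd L \le n-1$ with no further bookkeeping. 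So the sketch is sound and essentially identical in strategy to the cited source.
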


Generalising an argument of Avramov and Martsinkovsky in \cite{AvramovMartsinkovsky-AbsoluteRelativeAndTateCohomology-FiniteGorenstein}, Asadollahi Bahlekeh and Salarian construct a long exact sequence:

\begin{Theorem}[Avramov--Martsinkovsky long exact sequence]\label{theorem:AM LES}\cite[Theorem 3.11]{AsadollahiBahlekehSalarian-HierachyCohomologicalDimensions}
For a group $G$ with $\Gcd G < \infty$, there is a long exact sequence of cohomology functors
\[ 0 \longrightarrow \GH^1(G, -) \longrightarrow H^1(G, -) \longrightarrow \cdots \]
\[ \cdots \longrightarrow \GH^n(G, -) \longrightarrow H^n(G, -) \longrightarrow \widehat{H}^n(G, -) \longrightarrow \GH^{n+1}(G, -) \longrightarrow \cdots\]
\end{Theorem}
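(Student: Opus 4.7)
The plan is to realise the sequence as the long-exact-$\Hom$-sequence associated to a short exact sequence of chain complexes obtained by comparing a Gorenstein projective resolution with an ordinary projective resolution of $R$, following the Avramov--Martsinkovsky strategy.

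Since $\Gcd G < \infty$, Theorem \ref{theorem:Holms theorem on proper vs non proper res} supplies a G-proper Gorenstein projective resolution $\gamma : \mathcal{G}_* \to R$ of length $d = \Gcd G$, and we also fix an ordinary projective resolution $\pi : P_* \to R$. Every projective module is Gorenstein projective, so the G-properness of $\mathcal{G}_*$ together with the standard comparison lemma produces a chain map $\phi : P_* \to \mathcal{G}_*$ lifting $\id_R$, unique up to chain homotopy. Form the mapping cone $C_* := \mathrm{cone}(\phi)$ and take the canonical short exact sequence of complexes
\[ 0 \longrightarrow \mathcal{G}_* \longrightarrow C_* \longrightarrow P_*[-1] \longrightarrow 0. \]
Applying $\Hom_{RG}(-, N)$ and using the identifications $H^*\Hom_{RG}(\mathcal{G}_*, N) = \GH^*(G, N)$ and $H^*\Hom_{RG}(P_*, N) = H^*(G, N)$, the resulting long exact sequence already contains the required $\GH^n \to H^n$ arrows; it remains to identify the third term $H^*\Hom_{RG}(C_*, N)$ with $\widehat{H}^{*-1}(G, N)$.

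This last identification is the crux of the argument and the main obstacle. The cone $C_*$ is acyclic (being the cone of a quasi-isomorphism between two resolutions of $R$) and, because $\mathcal{G}_k = 0$ for $k > d$, coincides in high degree with a shift of $P_*$, hence with part of a projective resolution of the $d$-th syzygy of $R$. In the remaining bounded range each $\mathcal{G}_k$ is Gorenstein projective and therefore embeds as a syzygy in some strong complete resolution by definition; splicing these complete resolutions into $C_*$ produces a natural comparison with a genuine strong complete resolution $T_* \to R$. The G-properness of $\gamma$ guarantees that the comparison $T_* \to C_*$ is a $\Hom_{RG}(-, Q)$-quasi-isomorphism for every projective $Q$, and a dimension-shifting argument extends this to arbitrary coefficients $N$, giving $H^n\Hom_{RG}(C_*, N) \cong \widehat{H}^{n-1}(G, N)$ as required.

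Finally, a routine analysis of the low-degree behaviour of the long exact sequence -- using that $\GH^0(G, N) = H^0(G, N) = N^G$ with the induced map between them the identity -- pins down that the sequence truncates to begin with $0 \to \GH^1(G, -) \to H^1(G, -)$ in positive degrees. Naturality in $N$ is automatic, since the construction of $P_*$, $\mathcal{G}_*$, $\phi$ and $C_*$ is independent of $N$ while $\Hom_{RG}(-, N)$ is functorial in $N$.
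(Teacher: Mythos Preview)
Your mapping-cone approach is valid and is essentially the original Avramov--Martsinkovsky construction, but it differs from the route the paper follows (via Asadollahi--Bahlekeh--Salarian). There one starts with a strong complete resolution $T_*$ coinciding with a projective resolution $P_*$ in high degrees, takes a \emph{surjective} comparison $T_*^{\ge 0} \twoheadrightarrow P_*$ of the non-negative truncation onto $P_*$, and uses the short exact sequence $0 \to K_* \to T_*^{\ge 0} \to P_* \to 0$. In that sequence $T_*^{\ge 0}$ already computes $\widehat{H}^i$ for $i \ge 1$ and $P_*$ computes $H^i$; the nontrivial identification is that the projective kernel complex $K_*$ computes $\GH^{i+1}$, which one sees by noting that $K_*$ together with the zeroth cokernel $A$ of $T_*$ assembles into a G-proper Gorenstein projective resolution of $R$. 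So the two arguments place the delicate identification on opposite terms: you must show the cone computes Tate cohomology, while the paper must show the kernel computes Gorenstein cohomology. The paper's version has the advantage that the complete cohomology appears for free from $T_*$, at the cost of building the Gorenstein resolution implicitly rather than invoking Holm's theorem.

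Your argument for the key step $H^n\Hom_{RG}(C_*, N) \cong \widehat{H}^{n-1}(G, N)$ is, however, not yet a proof. The phrase ``splicing these complete resolutions into $C_*$'' does not describe a well-defined construction: each $\mathcal{G}_k$ sits in its own totally acyclic complex, but there is no evident way to amalgamate these into a single strong complete resolution of $R$ equipped with a comparison map to $C_*$. Furthermore, G-properness of $\gamma$ concerns $\Hom_{RG}(Q, \mathcal{G}_*)$ for Gorenstein projective $Q$, not $\Hom_{RG}(C_*, Q)$ for projective $Q$, so it does not directly yield the quasi-isomorphism you assert; and a $\Hom(-,Q)$-quasi-isomorphism for projective $Q$ alone does not dimension-shift to arbitrary $N$. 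A cleaner way to close this gap is to take a strong complete resolution $T_*$ of $R$ with coincidence index $d$ sharing the same projective tail $P_*$, and compare $C_*$ directly with the shifted truncation $T_*^{\ge 0}[-1]$: both are acyclic bounded-below complexes of Gorenstein projectives agreeing above degree $d$, and the comparison lemma for such complexes (using that Gorenstein projectives satisfy $\Ext^{>0}(-,Q)=0$ against projectives) gives the required $\Hom(-,N)$-quasi-isomorphism.
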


The construction relies on the complete cohomology being calculable via a complete resolution, hence the requirement that $\Gcd G < \infty$.

We will need the following lemma later:

\begin{Lemma}\label{lemma:Gproper res of R is Fsplit}
 Any G-proper resolution of $R$ is $\mathfrak{F}$-split.
\end{Lemma}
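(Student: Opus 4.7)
The plan is to use the standard characterization from relative homological algebra, recalled in the introduction: the $\mathfrak{F}$-projective modules form the projective class for the class of $\mathfrak{F}$-split short exact sequences, which means that a short exact sequence of $RG$-modules is $\mathfrak{F}$-split if and only if $\Hom_{RG}(P,-)$ preserves its exactness for every $\mathfrak{F}$-projective $P$.  The analogous statement for G-proper sequences, with Gorenstein projective test objects, is precisely the definition of G-properness.  Consequently the lemma reduces to the purely homological claim that every $\mathfrak{F}$-projective module over $RG$ is Gorenstein projective: once this is in hand, G-properness of $P_* \to R$ immediately yields exactness of $\Hom_{RG}(P,-)$ on each syzygy short exact sequence for every $\mathfrak{F}$-projective $P$, which is exactly $\mathfrak{F}$-splitness of the resolution.

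Since Gorenstein projective modules are closed under direct sums and direct summands, it suffices to prove that $N \otimes_R R[G/H] \cong \mathrm{Ind}_H^G(\mathrm{Res}_H^G N)$ is Gorenstein projective for each $RG$-module $N$ and each $H \in \mathfrak{F}$.  The case $N = R$ is precisely \cite[Lemma~2.21]{AsadollahiBahlekehSalarian-HierachyCohomologicalDimensions} already invoked in the introduction.  For general $N$, I would build a strong complete resolution $T_*$ of $\mathrm{Res}_H^G N$ over $RH$---this exists because $RH$ is Iwanaga--Gorenstein for $H$ finite, so every $RH$-module admits a strong complete resolution---and then induce it to $RG$.  The relevant preservation properties are: $\mathrm{Ind}_H^G$ is exact since $RG$ is free as an $RH$-module, preserves projectivity because its right adjoint $\mathrm{Res}_H^G$ is exact, and by the Frobenius isomorphism $\Hom_{RG}(\mathrm{Ind}_H^G T_*, Q) \cong \Hom_{RH}(T_*, \mathrm{Res}_H^G Q)$, combined with the $RH$-projectivity of $\mathrm{Res}_H^G Q$ for every $RG$-projective $Q$, carries the strong complete property across induction.

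The main obstacle will be the coincidence-index bookkeeping: arranging for $\mathrm{Ind}_H^G(\mathrm{Res}_H^G N)$ to appear as a cokernel in degree zero of the induced strong complete resolution, so that it is genuinely Gorenstein projective rather than merely of finite Gorenstein projective dimension.  I would address this by shifting the $RH$-side strong complete resolution appropriately, or alternatively by prepending a projective resolution of $\mathrm{Res}_H^G N$ before inducing, exploiting the flexibility inherent in the construction of strong complete resolutions over Iwanaga--Gorenstein rings.
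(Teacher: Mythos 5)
The reduction you propose---reduce $\mathfrak{F}$-splitness of the resolution to showing every $\mathfrak{F}$-projective is Gorenstein projective---is a valid implication, but the target statement is \emph{false}, so the argument cannot be completed.  Take $R=\ZZ$, let $N=\ZZ/2$ with trivial $G$-action, and take $H=\{1\}\in\mathfrak{F}$.  Then $N\otimes_\ZZ \ZZ[G/H]\cong(\ZZ/2)[G]$ is $\mathfrak{F}$-projective by definition.  However, by Frobenius reciprocity
\[
\Ext^1_{\ZZ G}\bigl((\ZZ/2)[G],\ZZ G\bigr)\;\cong\;\Ext^1_{\ZZ}\bigl(\ZZ/2,\,\mathrm{Res}_1^G\,\ZZ G\bigr)\;\cong\;\textstyle\bigoplus_{g\in G}\ZZ/2\;\neq\;0,
\]
and a Gorenstein projective module must have vanishing $\Ext^{\geq 1}$ against projectives, so $(\ZZ/2)[G]$ is not Gorenstein projective.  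This is exactly where your inductive construction fails: you need $\mathrm{Res}_H^G N$ to be Gorenstein projective over $RH$ (so that $\mathrm{Ind}_H^G\mathrm{Res}_H^G N$ is a degree-zero cokernel of the induced strong complete resolution), but for $H=\{1\}$ and $N=\ZZ/2$ you would need $\ZZ/2$ to be Gorenstein projective over $\ZZ$, which it is not.  Even if $RH$ were Iwanaga--Gorenstein (which already requires hypotheses on $R$ that the paper does not impose), that only forces $\mathrm{Res}_H^G N$ to have \emph{finite} Gorenstein projective dimension, not dimension zero; no amount of ``shifting'' or ``prepending'' changes which module sits as a cokernel in a strong complete resolution.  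The ``coincidence-index bookkeeping'' you defer is therefore not a technicality but an irreparable obstruction.

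The paper's proof avoids all of this precisely by \emph{not} trying to show the $\mathfrak{F}$-projectives are Gorenstein projective.  It only tests G-properness against the specific test objects $R[G/H]$ for $H$ finite (which \emph{are} Gorenstein projective by \cite[Lemma 2.21]{AsadollahiBahlekehSalarian-HierachyCohomologicalDimensions}) to obtain exactness of the fixed-point complexes $P_*^H$, and then invokes \cite[Remark 5.5, Lemma 5.11]{Me-CohomologicalMackey}, which characterises $\mathfrak{F}$-splitness of a resolution of $R$ in terms of exactness of the fixed-point complexes.  That characterisation is tailored to the situation at hand and is genuinely different from the general ``$\mathfrak{F}$-split iff $\Hom_{RG}(P,-)$ preserves exactness for all $\mathfrak{F}$-projective $P$'' test that you use; the latter requires the much stronger (and false) containment of $\mathfrak{F}$-projectives among Gorenstein projectives.
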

\begin{proof}
 If $P_*$ is a G-proper resolution of $R$ then since $R[G/H]$ is a Gorenstein projective \cite[Lemma 2.21]{AsadollahiBahlekehSalarian-HierachyCohomologicalDimensions},
\[ \Hom_{RG}(R[G/H], P_*) \cong \Hom_{RH}(R, P_*) \cong P_*^H \]
 is exact, thus $P_*$ is $\mathfrak{F}$-split \cite[Remark 5.5, Lemma 5.11]{Me-CohomologicalMackey}.
\end{proof}

\section{\texorpdfstring{$\mathfrak{F}_G$}{F\_G}-cohomology}\label{section:FGcohomology}

\subsection{Construction}

We define another special case of relative homology, which we call the $\mathfrak{F}_G$-cohomology.  It enables us to build an Avramov--Martsinkovsky long exact sequence of homological functors containing $\FH^*$ and $\FhatH^*$.  

We define an \emph{$\mathfrak{F}_G$-projective} to be the cokernel in a $\mathfrak{F}$-complete $\mathfrak{F}$-strong resolution and say a complex $C_*$ of $RG$-modules is \emph{$\mathfrak{F}_G$-proper} if $\Hom_{RG}(Q, C_*)$ is exact for any $\mathfrak{F}_G$-projective $Q$.  The $\mathfrak{F}_G$-proper short exact sequences form an allowable class in the sense of Mac Lane, whose projective objects are the $\mathfrak{F}_G$-projectives --- to check the class of $\mathfrak{F}_G$-proper short exact sequences is allowable we need only check that given a $\mathfrak{F}_G$-proper short exact sequence, any isomorphic short exact sequence is $\mathfrak{F}_G$-proper and that for any $RG$-module $A$ the short exact sequences
\[ 0 \longrightarrow A \stackrel{\id}{\longrightarrow} A \longrightarrow 0 \longrightarrow 0 \]
and
\[ 0 \longrightarrow 0 \longrightarrow A \stackrel{\id}{\longrightarrow} A \longrightarrow 0 \]
are $\mathfrak{F}_G$-proper.

We don't know if the class of $\mathfrak{F}_G$-projectives is precovering (see \cite[\S 8]{EnochsJenda-RelativeHomologicalAlgebra1}), so we don't know if there always exists an $\mathfrak{F}_G$-proper $\mathfrak{F}_G$-projective resolution.  However if $A$ and $B$ admit $\mathfrak{F}_G$-proper $\mathfrak{F}_G$-resolutions $P_*$ and $Q_*$ respectively then any map $A \longrightarrow B$ induces a map of resolutions $P_* \longrightarrow Q_*$ which is unique up to chain homotopy equivalence \cite[IX.4.3]{MacLane-Homology} and we have a slightly weaker form of the Horseshoe Lemma, the proof of which is as in \cite[8.2.1]{EnochsJenda-RelativeHomologicalAlgebra1}:

\begin{Lemma}[Horseshoe Lemma]\label{lemma:weak horseshoe for FG}
 Suppose 
 \[ 0 \longrightarrow A \longrightarrow B \longrightarrow C \longrightarrow 0 \]
 is a $\mathfrak{F}_G$-proper short exact sequence of $RG$-modules and both $A$ and $C$ admit $\mathfrak{F}_G$-proper $\mathfrak{F}_G$-projective resolutions $P_*$ and $Q_*$ then there is an $\mathfrak{F}_G$-proper resolution $S_*$ of $B$ such that $S_i = P_i \oplus Q_i$ and there is an $\mathfrak{F}_G$-proper short exact sequence of augmented complexes 
\[0 \longrightarrow \tilde P_* \longrightarrow \tilde S_* \longrightarrow \tilde Q_* \longrightarrow 0 \]
\end{Lemma}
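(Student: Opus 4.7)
My plan is to follow the classical Horseshoe Lemma argument as in \cite[Lemma 8.2.1]{EnochsJenda-RelativeHomologicalAlgebra1}, with adjustments to ensure $\mathfrak{F}_G$-properness at each stage. First I would construct the degree-zero piece of $S_*$. Since $Q_0$ is $\mathfrak{F}_G$-projective and the given short exact sequence $0 \to A \to B \to C \to 0$ is $\mathfrak{F}_G$-proper, the map $\Hom_{RG}(Q_0, B) \to \Hom_{RG}(Q_0, C)$ is surjective, so the composite $Q_0 \twoheadrightarrow C$ lifts to some $\widetilde{q} \colon Q_0 \to B$. Setting $S_0 = P_0 \oplus Q_0$ and defining $S_0 \to B$ by $(p, q) \mapsto (\iota \circ \varepsilon_P)(p) + \widetilde{q}(q)$, where $\iota \colon A \hookrightarrow B$ and $\varepsilon_P \colon P_0 \twoheadrightarrow A$ are the structural maps, a quick diagram chase shows this is surjective.

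Next I would verify by the Snake Lemma that the induced map of kernels fits into a short exact sequence
\[ 0 \longrightarrow K_A \longrightarrow K_B \longrightarrow K_C \longrightarrow 0, \]
where $K_A = \ker \varepsilon_P$, $K_C = \ker \varepsilon_Q$, and $K_B = \ker(S_0 \to B)$. The main point — and the step most in need of care — is to show that this kernel sequence is itself $\mathfrak{F}_G$-proper, so the induction can proceed. For this I would take any $\mathfrak{F}_G$-projective $N$ and consider the $3 \times 3$ diagram obtained by applying $\Hom_{RG}(N, -)$ to the three rows (kernels, $S_0$-layer, and augmentations). The middle row is exact because it is a split short exact sequence, the bottom row is exact by $\mathfrak{F}_G$-properness of $0 \to A \to B \to C \to 0$, and the outer columns are exact because $P_*$ and $Q_*$ are $\mathfrak{F}_G$-proper resolutions. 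A standard diagram chase (essentially the $3\times 3$ lemma applied to $\Hom(N,-)$) then yields exactness of the top row, which is precisely $\mathfrak{F}_G$-properness of the kernel sequence.

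Having built the resolution one degree at a time, I would then check that the natural maps $\tilde P_* \to \tilde S_*$ (inclusion of the first summand) and $\tilde S_* \to \tilde Q_*$ (projection onto the second summand) assemble into a short exact sequence of augmented complexes, and that in each degree this sequence is $\mathfrak{F}_G$-proper since it is split. Iterating the construction on the kernel short exact sequence produces $S_i = P_i \oplus Q_i$ for all $i \ge 0$ with the required properties.

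The main obstacle is the step verifying $\mathfrak{F}_G$-properness of the kernel short exact sequence, because unlike the classical setting we cannot invoke generic splitness against ordinary projectives; we are forced to test exactness only against $\mathfrak{F}_G$-projectives $N$, using crucially that the given resolutions $P_*$ and $Q_*$ were assumed $\mathfrak{F}_G$-proper to begin with. Once this is in hand the remainder is formal bookkeeping, entirely parallel to \cite[Lemma 8.2.1]{EnochsJenda-RelativeHomologicalAlgebra1}.
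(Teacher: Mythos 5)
Your proposal is correct and is essentially the same approach as the paper's, which simply cites \cite[Lemma 8.2.1]{EnochsJenda-RelativeHomologicalAlgebra1}; what you have written is a careful unpacking of that relative Horseshoe Lemma argument adapted to the $\mathfrak{F}_G$-proper class. One small caveat: in the $3\times3$ step you list the middle and bottom rows plus the outer columns as exact and invoke ``the $3\times 3$ lemma,'' but the standard nine lemma needs either all rows or all columns exact; you must first deduce exactness of the middle column $0 \to \Hom_{RG}(N,K_B)\to\Hom_{RG}(N,S_0)\to\Hom_{RG}(N,B)\to 0$ by a short chase (lift $N\to B$ along the right column to $Q_0$, subtract, lift the resulting map into $A$ along the left column), after which the nine lemma does give exactness of the top row --- your phrase ``standard diagram chase'' covers this, but it is worth making explicit since the $\mathfrak{F}_G$-properness of the middle column is \emph{a priori} the thing being established, not an input.
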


For any module $M$ which admits an $\mathfrak{F}_G$-proper resolution $P_*$ by $\mathfrak{F}_G$-projectives we define
\[\FGExt^*_{RG}(M, N) = H^*\Hom_{RG}(P_*, N)\]
We define also
\[\FGH^*(G, -) = \FGExt^*_{RG} (R, -) \]

The next lemma follows from Lemma \ref{lemma:weak horseshoe for FG}, see \cite[8.2.3]{EnochsJenda-RelativeHomologicalAlgebra1}.
\begin{Lemma}\label{lemma:long exact sequences in FGExt}
Suppose 
 \[ 0 \longrightarrow A \longrightarrow B \longrightarrow C \longrightarrow 0 \]
 is a $\mathfrak{F}_G$-proper short exact sequence of $RG$-modules and both $A$ and $C$ admit $\mathfrak{F}_G$-proper $\mathfrak{F}_G$-projective resolutions, then there is an $\FGExt^*_{RG}(-,M)$ long exact sequence for any $RG$-module $M$.
\end{Lemma}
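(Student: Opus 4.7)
The plan is to follow the standard argument for deriving a long exact sequence from a short exact sequence in a relative homological setting, using the Horseshoe Lemma just established.

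First I would invoke Lemma \ref{lemma:weak horseshoe for FG} to obtain an $\mathfrak{F}_G$-proper $\mathfrak{F}_G$-projective resolution $S_*$ of $B$ with $S_i = P_i \oplus Q_i$, fitting into a short exact sequence of augmented complexes
\[ 0 \longrightarrow \tilde{P}_* \longrightarrow \tilde{S}_* \longrightarrow \tilde{Q}_* \longrightarrow 0. \]
The key observation is that since $S_i = P_i \oplus Q_i$, in each degree this sequence is a \emph{split} short exact sequence of $RG$-modules. Hence applying the functor $\Hom_{RG}(-,M)$ degreewise yields a short exact sequence of cochain complexes
\[ 0 \longrightarrow \Hom_{RG}(\tilde{Q}_*, M) \longrightarrow \Hom_{RG}(\tilde{S}_*, M) \longrightarrow \Hom_{RG}(\tilde{P}_*, M) \longrightarrow 0. \]

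Next I would take the associated long exact sequence in cohomology. To identify the cohomology groups as the correct $\FGExt$ groups I need the well-definedness of $\FGExt^*_{RG}(-,M)$: this follows from the fact, already recorded in the paper, that between any two $\mathfrak{F}_G$-proper $\mathfrak{F}_G$-projective resolutions of a given module any map lifts uniquely up to chain homotopy (\cite[IX.4.3]{MacLane-Homology}), so the cohomology of $\Hom_{RG}(P_*,M)$ does not depend on the choice of resolution $P_*$. Thus the long exact sequence in cohomology is precisely the desired long exact sequence in $\FGExt^*_{RG}(-,M)$.

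Finally, one should check that the connecting homomorphism is natural in $M$ and that the maps $\FGExt^*_{RG}(C,M) \to \FGExt^*_{RG}(B,M) \to \FGExt^*_{RG}(A,M)$ obtained this way agree with the maps induced by the morphisms $A \to B \to C$ in the usual way; both are routine verifications from the construction of $S_*$ in the Horseshoe Lemma (the inclusion $P_i \hookrightarrow P_i \oplus Q_i$ lifts $A \hookrightarrow B$, and the projection $P_i \oplus Q_i \twoheadrightarrow Q_i$ lifts $B \twoheadrightarrow C$).

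The only subtle point — and the step that needs the full strength of $\mathfrak{F}_G$-properness of the original short exact sequence — is verifying that the Horseshoe Lemma applies, but this is exactly the hypothesis of Lemma \ref{lemma:weak horseshoe for FG}. Since we lack a precovering property for $\mathfrak{F}_G$-projectives, we cannot drop the hypothesis that $A$ and $C$ admit $\mathfrak{F}_G$-proper $\mathfrak{F}_G$-projective resolutions, but granting those the argument is a direct translation of the classical proof in \cite[8.2.3]{EnochsJenda-RelativeHomologicalAlgebra1}.
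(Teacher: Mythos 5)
Your approach is the one the paper intends (invoke Lemma \ref{lemma:weak horseshoe for FG} and then follow the standard derivation of \cite[8.2.3]{EnochsJenda-RelativeHomologicalAlgebra1}), and the argument is essentially correct, but there is a small slip to fix. You should apply $\Hom_{RG}(-,M)$ to the short exact sequence of \emph{deleted} resolutions
\[ 0 \longrightarrow P_* \longrightarrow S_* \longrightarrow Q_* \longrightarrow 0, \]
not to the augmented complexes $\tilde{P}_*$, $\tilde{S}_*$, $\tilde{Q}_*$. In degree $-1$ the augmented sequence is the original $0 \to A \to B \to C \to 0$, which is $\mathfrak{F}_G$-proper but need not be $RG$-split, so your claim of degreewise splitness breaks there; and in any case $\tilde{P}_*$ is acyclic, so $H^0\Hom_{RG}(\tilde{P}_*,M) = 0$ rather than $\FGExt^0_{RG}(A,M) = \Hom_{RG}(A,M)$, meaning the augmented version would lose the degree-zero terms of the long exact sequence. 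Restricting to degrees $\ge 0$, where $S_i = P_i \oplus Q_i$ does give a split sequence, the rest of your argument — the identification of the cohomology groups via the comparison lemma \cite[IX.4.3]{MacLane-Homology} and the compatibility of the maps with those induced by $A \to B \to C$ — goes through exactly as you describe.
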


For any $RG$-module $M$ the \emph{$\mathfrak{F}_G$ projective dimension} of $G$ denoted $\FGpd M$ is the minimal length of an $\mathfrak{F}_G$-proper resolution of $M$ by $\mathfrak{F}_G$-projectives.  We set $\FGcd G = \FGpd R$.  Note that these finiteness conditions will not be defined unless $R$ admits an $\mathfrak{F}_G$-proper resolution by $\mathfrak{F}_G$-projectives.

One could think of $\mathfrak{F}_G$-cohomology as the ``Gorenstein cohomology relative $\mathfrak{F}$''.
\subsection{Technical Results}

We need a couple of results for the $\mathfrak{F}_G$-cohomology whose analogs are well known for Gorenstein cohomology \cite{Holm-GorensteinHomologicalDimensions}.

We say an $RG$-module $M$ admits a \emph{right} resolution by $\mathfrak{F}$-projectives if there exists an exact chain complex
\[ 0 \longrightarrow M \longrightarrow T_{-1} \longrightarrow T_{-2} \longrightarrow \cdots \]
where the $T_i$ are $\mathfrak{F}$-projectives.  $\mathfrak{F}$-strong right resolutions and $\mathfrak{F}$-split right resolutions are defined as for any chain complex.

\begin{Lemma}\label{lemma:FGprog iff Ext vanishes and strong right res}
 An $RG$-module $M$ is $\mathfrak{F}_G$-projective if and only if $M$ satisfies
\begin{equation*}
 \FExt^i_{RG}(M, Q) \cong 0 \text{ for all $\mathfrak{F}$-projective $Q$ } \tag{$\star$} 
\end{equation*}
for all $i \ge 1$ and $M$ admits a right $\mathfrak{F}$-strong $\mathfrak{F}$-split resolution by $\mathfrak{F}$-projectives.
\end{Lemma}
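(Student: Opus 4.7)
The plan is to adapt the familiar Holm-style characterisation of Gorenstein projectives (cf.\ \cite{Holm-GorensteinHomologicalDimensions}) to the $\mathfrak{F}$-relative setting. For the forward direction, suppose $M$ is $\mathfrak{F}_G$-projective and fix an $\mathfrak{F}$-complete $\mathfrak{F}$-strong resolution $T_*$ with $M$ as a cokernel; after reindexing I may assume the image of $T_0 \to T_{-1}$ equals $M$. The right half
\[ 0 \longrightarrow M \longrightarrow T_{-1} \longrightarrow T_{-2} \longrightarrow \cdots \]
is then a right resolution of $M$ by $\mathfrak{F}$-projectives which inherits $\mathfrak{F}$-splitness and $\mathfrak{F}$-strongness from $T_*$. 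To verify the $\FExt$-vanishing I would compute $\FExt^i_{RG}(M, Q)$ using the left half $P_* := T_{\ge 0}$, which is itself an $\mathfrak{F}$-split $\mathfrak{F}$-projective resolution of $M$; in positive degrees the differentials of $\Hom_{RG}(P_*, Q)$ and $\Hom_{RG}(T_*, Q)$ coincide, the latter is acyclic by $\mathfrak{F}$-strongness, and so $\FExt^i_{RG}(M, Q) = 0$ for every $i \ge 1$.

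For the converse, suppose $M$ satisfies both conditions. I would pick an $\mathfrak{F}$-split $\mathfrak{F}$-projective resolution $P_*$ of $M$ using Lemma~\ref{lemma:Fsplit resolutions exist} and splice it with the given right resolution $T^\bullet$ by sending $P_0 \to T^{-1}$ along the composite $P_0 \twoheadrightarrow M \hookrightarrow T^{-1}$, producing a doubly infinite complex
\[ C_* \colon \quad \cdots \longrightarrow P_1 \longrightarrow P_0 \longrightarrow T^{-1} \longrightarrow T^{-2} \longrightarrow \cdots \]
of $\mathfrak{F}$-projectives. Acyclicity at the splice is automatic: the kernel of $P_0 \to T^{-1}$ coincides with the kernel of $P_0 \to M$, and the image of $P_0 \to T^{-1}$ equals $M$, which is in turn the kernel of $T^{-1} \to T^{-2}$. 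Each kernel short exact sequence arising from the filtration of $C_*$ is inherited from either $P_*$ or $T^\bullet$, so $C_*$ is $\mathfrak{F}$-split.

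The crux of the argument, and the place where the two hypotheses genuinely interact, is verifying that $C_*$ is $\mathfrak{F}$-strong. Away from the splicing point exactness of $\Hom_{RG}(C_*, Q)$ follows from the $\mathfrak{F}$-strong hypothesis on $T^\bullet$ (on the right) and from the vanishing $\FExt^i_{RG}(M, Q) = 0$ for $i \ge 1$ (on the left). At the splice the map $\Hom_{RG}(T^{-1}, Q) \to \Hom_{RG}(P_0, Q)$ factors as
\[ \Hom_{RG}(T^{-1}, Q) \longtwoheadrightarrow \Hom_{RG}(M, Q) \longhookrightarrow \Hom_{RG}(P_0, Q), \]
where surjectivity of the first map comes from the augmented $\Hom_{RG}(T^\bullet, Q)$ being acyclic, and the image of the second map equals the kernel of $\Hom_{RG}(P_0, Q) \to \Hom_{RG}(P_1, Q)$ because $\FExt^0_{RG}(M, Q) = \Hom_{RG}(M, Q)$. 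A brief diagram chase then yields exactness at both $\Hom_{RG}(T^{-1}, Q)$ and $\Hom_{RG}(P_0, Q)$, completing the check. This exhibits $C_*$ as an $\mathfrak{F}$-complete $\mathfrak{F}$-strong resolution in which $M$ is the cokernel of $P_1 \to P_0$, so $M$ is $\mathfrak{F}_G$-projective.
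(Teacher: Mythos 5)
Your proof is correct and takes essentially the same approach as the paper: truncate the $\mathfrak{F}$-strong $\mathfrak{F}$-complete resolution at $M$ to obtain both the left resolution (yielding $(\star)$) and the right resolution, and, conversely, splice the standard $\mathfrak{F}$-split $\mathfrak{F}$-projective resolution of $M$ with the given right resolution. Your explicit verification of $\mathfrak{F}$-strongness at the splice point is a more detailed account of what the paper leaves implicit, but it does not change the structure of the argument.
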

\begin{proof}
 If $M$ is the cokernel of a $\mathfrak{F}$-strong $\mathfrak{F}$-complete resolution $T_*$ then for all $i \ge 1$ and any $\mathfrak{F}$-projective $Q$,
\[ \FExt^i_{RG}( M, Q ) \cong H^i\Hom_{RG}(T_*^+, Q) \]
Where $T_*^+$ denotes the resolution $T_i^+ = T_i$ if $i \ge 0$ and $T_i^+ = 0$ for $i < 0$.  Then $(\star)$ follows because $T_*$ is $\mathfrak{F}$-strong.  

Conversely given $(\star)$ and an $\mathfrak{F}$-strong right resolution $T_*^-$ then let $T_*^+$ be the standard $\mathfrak{F}$-split resolution for $M$ (Lemma \ref{lemma:Fsplit resolutions exist}), $(\star)$ ensures that $T_*^+$ is $\mathfrak{F}$-strong and splicing together $T_*^+$ and $T_*^-$ gives the required resolution.
\end{proof}

\begin{Lemma}\label{lemma:FExt(Gproj, finite Fpd) = 0}
 If $\Fpd N < \infty$ and $M$ is $\mathfrak{F}_G$-projective then $\FExt^i_{RG}(M, N) = 0$ for all $ i \ge 1$.
\end{Lemma}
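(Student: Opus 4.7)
The plan is to induct on $\Fpd N$, using the characterisation of $\mathfrak{F}_G$-projectives from Lemma \ref{lemma:FGprog iff Ext vanishes and strong right res} as the base case and the standard dimension-shifting trick for the inductive step.

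For the base case $\Fpd N = 0$, the module $N$ is itself $\mathfrak{F}$-projective, so the vanishing $\FExt^i_{RG}(M, N) = 0$ for $i \ge 1$ is exactly the condition $(\star)$ of Lemma \ref{lemma:FGprog iff Ext vanishes and strong right res}, which holds since $M$ is $\mathfrak{F}_G$-projective.

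For the inductive step, suppose $\Fpd N = n \ge 1$ and that the result is known for modules of strictly smaller $\mathfrak{F}$-projective dimension. Since $N$ admits an $\mathfrak{F}$-split $\mathfrak{F}$-projective resolution of length $n$, I can truncate it to produce an $\mathfrak{F}$-split short exact sequence
\[ 0 \longrightarrow K \longrightarrow P \longrightarrow N \longrightarrow 0 \]
with $P$ an $\mathfrak{F}$-projective and $\Fpd K \le n - 1$. The standard machinery of the allowable class of $\mathfrak{F}$-split short exact sequences (via the Horseshoe Lemma \ref{lemma:Fcohomology horseshoe} applied to an $\mathfrak{F}$-split $\mathfrak{F}$-projective resolution of $M$, which exists by Lemma \ref{lemma:Fsplit resolutions exist}) yields a long exact sequence
\[ \cdots \longrightarrow \FExt^i_{RG}(M, K) \longrightarrow \FExt^i_{RG}(M, P) \longrightarrow \FExt^i_{RG}(M, N) \longrightarrow \FExt^{i+1}_{RG}(M, K) \longrightarrow \cdots \]
By the base case, $\FExt^i_{RG}(M, P) = 0$ for $i \ge 1$, and by induction $\FExt^{i+1}_{RG}(M, K) = 0$ for $i \ge 0$. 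Hence $\FExt^i_{RG}(M, N) = 0$ for $i \ge 1$, completing the induction.

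There isn't really a hard step here. The only thing to be a little careful about is the existence of the long exact sequence in the second variable, but this is immediate from the allowable-class formalism of Mac Lane once the short exact sequence is chosen to be $\mathfrak{F}$-split. All other ingredients (the dimension-shift, existence of $\mathfrak{F}$-projective covers in short exact sequences of finite $\mathfrak{F}$-projective dimension, and the $(\star)$ characterisation) are available verbatim.
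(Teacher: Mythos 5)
Your proof is correct, and it is essentially the paper's argument unwound: the paper states the dimension shift $\FExt^i_{RG}(M,N) \cong \FExt^{i+j}_{RG}(M,K_j)$ along the syzygies $K_j$ of an $\mathfrak{F}$-split $\mathfrak{F}$-projective resolution of $N$ and then invokes the $(\star)$ characterisation from Lemma \ref{lemma:FGprog iff Ext vanishes and strong right res} once $K_j$ is $\mathfrak{F}$-projective; your induction on $\Fpd N$ is exactly that shift carried out one step at a time. The one thing to fix is the parenthetical appeal to the Horseshoe Lemma \ref{lemma:Fcohomology horseshoe}: that lemma produces the long exact sequence in the \emph{first} variable, whereas you need the long exact sequence in the \emph{second} variable. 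The latter does not use the Horseshoe Lemma at all --- fix an $\mathfrak{F}$-split $\mathfrak{F}$-projective resolution $Q_*$ of $M$ (Lemma \ref{lemma:Fsplit resolutions exist}); since each $Q_i$ is $\mathfrak{F}$-projective, $\Hom_{RG}(Q_i,-)$ is exact on $\mathfrak{F}$-split short exact sequences, so applying $\Hom_{RG}(Q_*,-)$ to $0 \to K \to P \to N \to 0$ gives a short exact sequence of complexes and hence the long exact sequence you want. Your own final sentence already says this correctly; just drop the Horseshoe Lemma citation.
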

\begin{proof}
Let $P_* \longrightarrow N$ be a $\mathfrak{F}$-split $\mathfrak{F}$-projective resolution then by a standard dimension shifting argument 
\[  \FExt^i(M, N) \cong \FExt^{i+j}(M, K_j)  \]
where $K_j$ is the $j^\text{th}$ syzygy of $P_*$.  Since $K_j$ is projective for $j \ge n$ the result follows from Lemma \ref{lemma:FGprog iff Ext vanishes and strong right res}.
\end{proof}

\begin{Prop}\label{prop:syzygy Fproj then FGproper}
Let $A$ be any $RG$-module and $P_* \longrightarrow A$ a length $n$ $\mathfrak{F}$-split resolution of $A$ with $P_i$ $\mathfrak{F}$-projective for $i \ge 1$, then $P_*$ is $\mathfrak{F}_G$-proper.
\end{Prop}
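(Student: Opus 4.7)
The plan is to show, for an arbitrary $\mathfrak{F}_G$-projective $Q$, that $\Hom_{RG}(Q, \tilde P_*)$ is exact, by breaking the length-$n$ resolution into $\mathfrak{F}$-split short exact sequences at the syzygies and then feeding Lemma \ref{lemma:FGprog iff Ext vanishes and strong right res} into the associated long exact sequences of $\FExt^*_{RG}(Q, -)$. Write $K_i := \ker(d_i)$ for the $i^{\mathrm{th}}$ syzygy, with the convention $d_0 = \epsilon: P_0 \to A$; then $K_{n-1} = P_n$, and since $\tilde P_*$ is $\mathfrak{F}$-split the induced short exact sequences
\[ 0 \longrightarrow K_i \longrightarrow P_i \longrightarrow K_{i-1} \longrightarrow 0 \quad (1 \le i \le n-1) \]
together with $0 \to K_0 \to P_0 \to A \to 0$ are themselves $\mathfrak{F}$-split, and so yield long exact sequences of $\FExt^*_{RG}(Q,-)$.

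The first step is to establish by downward induction on $i$ that $\FExt^j_{RG}(Q, K_i) = 0$ for all $j \ge 1$ and $0 \le i \le n-1$. The base case $i = n-1$ is immediate since $K_{n-1} = P_n$ is $\mathfrak{F}$-projective, to which Lemma \ref{lemma:FGprog iff Ext vanishes and strong right res} applies. For the inductive step the long exact sequence attached to $0 \to K_{i+1} \to P_{i+1} \to K_i \to 0$, together with the vanishing $\FExt^j_{RG}(Q, P_{i+1}) = 0$ for $j \ge 1$ (valid because $i+1 \ge 1$ forces $P_{i+1}$ to be $\mathfrak{F}$-projective), produces the dimension-shifting isomorphism $\FExt^j_{RG}(Q, K_i) \cong \FExt^{j+1}_{RG}(Q, K_{i+1})$ and hence the desired vanishing.

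With this in hand, each of the short exact sequences remains short exact after applying $\Hom_{RG}(Q, -)$; in particular $\FExt^1_{RG}(Q, K_0) = 0$ forces surjectivity of $\Hom_{RG}(Q, P_0) \to \Hom_{RG}(Q, A)$, which is the one place where $P_0$ failing to be $\mathfrak{F}$-projective might otherwise have caused trouble. Splicing the resulting short exact sequences yields exactness of $\Hom_{RG}(Q, \tilde P_*)$ at every position, with injectivity at $\Hom_{RG}(Q, P_n)$ coming for free from left exactness of $\Hom_{RG}(Q, -)$. The only subtle point, and thus the main thing to monitor, is that $P_0$ is not assumed to be $\mathfrak{F}$-projective, so the induction must be arranged so that $\FExt^j_{RG}(Q, P_i) = 0$ is only invoked for $i \ge 1$---a constraint that is respected automatically by starting the induction at the top of the resolution.
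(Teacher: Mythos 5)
Your proof is correct and follows essentially the same strategy as the paper: identify the syzygies $K_i$, observe that they have finite $\mathfrak{F}$-projective dimension, and deduce that $\FExt^j_{RG}(Q, K_i)$ vanishes for $j\ge 1$ whenever $Q$ is $\mathfrak{F}_G$-projective, so that $\Hom_{RG}(Q,-)$ preserves exactness of the short exact sequences at the syzygies. The only cosmetic difference is that you re-derive the vanishing by an explicit downward-induction dimension shift from Lemma \ref{lemma:FGprog iff Ext vanishes and strong right res}, whereas the paper cites Lemma \ref{lemma:FExt(Gproj, finite Fpd) = 0} (which packages exactly that dimension shift); you also handle all $n$ uniformly where the paper splits off $n=0$ and $n=1$ before treating $n\ge 2$, and you are explicit about the single place where $P_0$ not being $\mathfrak{F}$-projective matters. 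Substantively these are the same argument.
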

\begin{proof}
The case $n= 0$ is obvious.  If $n = 1$ then for any $\mathfrak{F}_G$-projective $Q$, there is a long exact sequence
\[ 0 \longrightarrow \Hom_{RG}(Q, P_1) \longrightarrow \Hom_{RG}(Q, P_0) \longrightarrow \Hom_{RG}(Q, A) \] 
\[ \longrightarrow \FExt^1_{RG}(Q, P_1) \longrightarrow \cdots \] 
But $\FExt^1_{RG}(Q, P_1) = 0$ by Lemma \ref{lemma:FExt(Gproj, finite Fpd) = 0}.  

Assume $n \ge 2$ and let $K_*$ be the syzygies of $P_*$, then there is an $\mathfrak{F}$-split resolution
\[ 0 \longrightarrow P_n \longrightarrow \cdots \longrightarrow P_{i+1} \longrightarrow K_i \longrightarrow 0 \] 
so $\Fpd K_i < \infty$ for all $i \ge 0$.  Thus every short exact sequence
\[ 0 \longrightarrow K_i \longrightarrow P_i \longrightarrow K_{i-1} \]
is $\mathfrak{F}_G$-proper by Lemma \ref{lemma:FExt(Gproj, finite Fpd) = 0}, so $P_*$ is $\mathfrak{F}_G$-proper.
\end{proof}

\begin{Lemma}[Comparison Lemma]\label{lemma:GF comparison lemma}
 Let $A$ and $B$ be two $RG$-modules with $\mathfrak{F}$-strong $\mathfrak{F}$-split right resolutions by $\mathfrak{F}$-projectives called $S^*$ and $T^*$ respectively, then any map $f: A \to B$ lifts to a map $f_*$ of complexes as shown below:
\[
\xymatrix{
0 \ar[r] & A \ar[r] \ar^f[d] & S^{1} \ar[r] \ar^{f_1}[d] & S^2 \ar[r] \ar^{f_2}[d] & \cdots \\
0 \ar[r] & B \ar[r] & T^{1} \ar[r]  & T^{2} \ar[r] & \cdots 
}
\]
The map of complexes is unique up to chain homotopy and if $f$ is $\mathfrak{F}$-split then so is $f_*$.
\end{Lemma}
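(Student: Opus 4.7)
The plan is to mimic the standard comparison theorem for projective left resolutions, with the $\mathfrak{F}$-strongness of the right resolutions replacing the projectivity of the resolving modules as the source of lifting. The key input is that since $S^*$ is $\mathfrak{F}$-strong and each $T^j$ is $\mathfrak{F}$-projective, applying $\Hom_{RG}(-, T^j)$ to the augmented sequence $0 \longrightarrow A \longrightarrow S^1 \longrightarrow S^2 \longrightarrow \cdots$ produces an exact complex; this is the only input needed beyond routine diagram-chasing.

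For existence I build $f_i : S^i \to T^i$ inductively. In the base case $i=1$, the composite $d_T^0 \circ f : A \to T^1$ lifts to some $f_1 : S^1 \to T^1$ with $f_1 \circ d_S^0 = d_T^0 \circ f$, by surjectivity of the restriction $\Hom_{RG}(S^1, T^1) \twoheadrightarrow \Hom_{RG}(A, T^1)$. For the inductive step, the composite $d_T^i \circ f_i : S^i \to T^{i+1}$ satisfies $d_T^i \circ f_i \circ d_S^{i-1} = d_T^i \circ d_T^{i-1} \circ f_{i-1} = 0$, so by exactness of $\Hom_{RG}(-, T^{i+1})$ at $\Hom_{RG}(S^i, T^{i+1})$ it factors through $d_S^i$ as the required $f_{i+1}$.

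For uniqueness, if $f_*$ and $g_*$ both lift $f$ then $h_* = f_* - g_*$ is a chain map with $h_0 = 0$, and I build a chain homotopy $s^i : S^i \to T^{i-1}$ inductively, starting with $s^1 = 0$. At the inductive step let $\phi_i = h_i - d_T^{i-1} \circ s^i$; a short computation using the chain-map identity $h_i \circ d_S^{i-1} = d_T^{i-1} \circ h_{i-1}$ together with the previous relation $h_{i-1} = s^i \circ d_S^{i-1} + d_T^{i-2} \circ s^{i-1}$ shows $\phi_i \circ d_S^{i-1} = 0$, so the same exactness supplies $s^{i+1}$ with $s^{i+1} \circ d_S^i = \phi_i$.

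The $\mathfrak{F}$-split clause is the step I expect to require the most care, since the exactness invoked above is blind to the $\mathfrak{F}$-split structure. My approach is to tensor the entire diagram with $R\Delta$, where both resolutions split degreewise as chain complexes of $\mathfrak{F}$-projectives, so that lifts can be chosen to respect the splittings, and then to verify that the resulting lifted $f_*$ is $\mathfrak{F}$-split over $RG$. The main obstacle is in setting up the bookkeeping so that these splittings can be chosen coherently at every degree rather than just existentially.
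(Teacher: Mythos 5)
Your first two paragraphs (existence and uniqueness of the lift via dimension shifting against the $\mathfrak{F}$-strong right resolution) are correct and are exactly the argument the paper outsources by citing the dual of \cite[p.169]{EnochsJenda-RelativeHomologicalAlgebra1} and \cite[Proposition 1.8]{Holm-GorensteinHomologicalDimensions}; spelling it out is fine.

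The $\mathfrak{F}$-split clause, which you yourself flag as the delicate step, is where the proposal has a genuine gap. Your plan is to tensor with $R\Delta$, ``choose lifts to respect the splittings,'' and then deduce that the resulting $f_*$ is $\mathfrak{F}$-split over $RG$. Two problems. First, the chain map $f_*$ is already pinned down (up to homotopy) over $RG$ by the first part of the argument; what must be shown is that \emph{this} $f_*$, restricted to $RH$ for an arbitrary finite subgroup $H$, admits a degreewise section --- you do not get to re-choose $f_*$ after the fact, and ``tensoring with $R\Delta$'' produces a different chain map between different modules, from which the splitting of $f_*|_{RH}$ itself does not follow formally. Second, the phrase ``lifts can be chosen to respect the splittings'' is exactly the content that needs to be supplied, and nothing in the sketch produces it. The paper instead proceeds by a direct induction: fix a finite $H$, let $\iota^S_*$, $\iota^T_*$ be contracting data for $S^*|_{RH}$ and $T^*|_{RH}$ furnished by the $\mathfrak{F}$-split hypothesis, start from a section $s_0$ of $f$ over $RH$ (the hypothesis that $f$ is $\mathfrak{F}$-split), and define $s_i := \partial^S_{i-1}\circ s_{i-1}\circ \iota^T_{i-1}$; the chain-map identity $f_i\partial^S_{i-1}=\partial^T_{i-1}f_{i-1}$ together with the inductive hypothesis $f_{i-1}s_{i-1}=\mathrm{id}$ then gives $f_is_i=\partial^T_{i-1}\iota^T_{i-1}$, which one checks is the identity using the structure of the splitting of $T^*|_{RH}$. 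The point is that the section must be built by hand from the contracting data, threaded through the same induction that produces $f_*$; a soft argument by tensoring does not deliver it. You should replace the third paragraph with an explicit inductive construction of $s_*$ over $RH$ along these lines.
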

\begin{proof}
 The Lemma without the $\mathfrak{F}$-splitting comes from dualising \cite[p.169]{EnochsJenda-RelativeHomologicalAlgebra1}, see also \cite[Proposition 1.8]{Holm-GorensteinHomologicalDimensions}. 

 Assume $f$ is $\mathfrak{F}$-split and consider the map of complexes restricted to $R H$ for some finite subgroup $H$ of $G$.  Let $\iota_*^T$ and $\iota_*^S$ denote the splittings of the top and bottom rows and $s_*$ the splitting of $f_*$, constructed only up to degree $i-1$.  The base case of the induction, when $i = 0$, holds because $f$ is $\mathfrak{F}$-split.
\[
\xymatrix@+15pt{
\cdots \ar^{\partial_{i-2}^S}@/^/[r] & S^{i-1} \ar^{\iota_{i-2}^S}@/^/[l] \ar^{\partial_{i-1}^S}@/^/[r] \ar^{f_{i-1}}@/^/[d] & S^i \ar^{\iota_{i-1}^S}@/^/[l] \ar^{\partial_{i}^S}@/^/[r] \ar^{f_{i}}@/^/[d] & \ar^{\iota_{i}^S}@/^/[l] \cdots \\
\cdots \ar^{\partial_{i-2}^T}@/^/[r] & T^{i-1} \ar^{\iota_{i-2}^T}@/^/[l] \ar^{\partial_{i-1}^T}@/^/[r] \ar^{s_{i-1}}@/^/[u] & T^i \ar^{\iota_{i-1}^T}@/^/[l] \ar^{\partial_{i}^T}@/^/[r] & \ar^{\iota_{i}^T}@/^/[l] \cdots 
}
\]
Let $s_{i} = \partial_{i-1}^S \circ s_{i-1} \circ \iota_{i-1}^T$.  Then
\begin{align*}
 f_i \circ s_i &= f_i \circ \partial_{i-1}^S \circ s_{i-1} \circ \iota_{i-1}^T \\
 &= \partial_{i-1}^T \circ f_{i-1} \circ s_{i-1} \circ \iota^T_{i-1} \\
 &= \partial_{i-1}^T \circ \iota_{i-1}^T \\
 &= \id_{T^i}
\end{align*}
Where the second equality is the commutativity condition coming from the fact that $f_*$ is a chain map.
\end{proof}

\subsection{An Avramov--Martsinkovsky Long Exact Sequence in \texorpdfstring{$\mathfrak{F}$}{F}-cohomology}

\begin{Theorem}\label{theorem:FAM LES}
Given an $\mathfrak{F}$-strong $\mathfrak{F}$-complete resolution of $R$ there is a long exact sequence
\[ 0 \longrightarrow \mathfrak{F}_G H^1(G, -) \longrightarrow \cdots  \]
\[ \cdots \longrightarrow \FhatH^{n-1}(G, -) \longrightarrow \FGH^n(G, -) \longrightarrow \FH^n(G, -)  \]
\[ \longrightarrow \FhatH^n(G, -) \longrightarrow \FGH^{n+1}(G, -) \longrightarrow \cdots \]
\end{Theorem}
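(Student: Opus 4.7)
My plan is to adapt the construction of Asadollahi--Bahlekeh--Salarian (Theorem \ref{theorem:AM LES}) to the $\mathfrak{F}$-relative setting. I would first truncate the given $\mathfrak{F}$-strong $\mathfrak{F}$-complete resolution $T_*$ of $R$ at its coincidence index $n$ to obtain a length-$n$ $\mathfrak{F}_G$-projective resolution
\[
 G_* \colon\ 0 \to K_n \to T_{n-1} \to \cdots \to T_0 \to R \to 0,
\]
where $K_n = \ker(T_{n-1} \to T_{n-2})$ is $\mathfrak{F}_G$-projective by definition. To verify that $G_*$ is $\mathfrak{F}_G$-proper, the syzygies below $K_n$ have finite $\mathfrak{F}$-projective dimension, so Lemma \ref{lemma:FExt(Gproj, finite Fpd) = 0} handles the corresponding short exact sequences. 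For the top sequence one needs $\FExt^i_{RG}(Q, K_n) = 0$ for $i \ge 1$ and every $\mathfrak{F}_G$-projective $Q$. I would establish this by a double-complex argument: let $\mathcal{V}_*$ be an $\mathfrak{F}$-strong $\mathfrak{F}$-complete resolution of $Q$ and $W_0 \to W_1 \to \cdots$ an $\mathfrak{F}$-strong right resolution of $K_n$ by $\mathfrak{F}$-projectives from Lemma \ref{lemma:FGprog iff Ext vanishes and strong right res}; the two spectral sequences for $\Hom_{RG}(\mathcal{V}_i, W_j)$ collapse respectively to $\widehat{\FExt}^i_{RG}(Q, K_n)$ (using $\mathfrak{F}$-projectivity of $\mathcal{V}_i$) and to zero (using $\mathfrak{F}$-strongness of $\mathcal{V}_*$ and $\mathfrak{F}$-projectivity of $W_j$). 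Comparing $\mathcal{V}_{\ge 0}$ with $\mathcal{V}_*$ identifies $\FExt^i_{RG}(Q, K_n) \cong \widehat{\FExt}^i_{RG}(Q, K_n)$ for $i \ge 1$, giving the required vanishing.

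Next I would pick an $\mathfrak{F}$-split $\mathfrak{F}$-projective resolution $P_*$ of $R$ and build a comparison chain map $\psi \colon P_* \to G_*$ lifting $\mathrm{id}_R$, using the $\mathfrak{F}$-splittings of the syzygy sequences of $G_*$. Applying $\Hom_{RG}(-, N)$ and forming the mapping cone of $\psi^*$ yields the long exact sequence
\[
 \cdots \to \FGH^i(G, N) \to \FH^i(G, N) \to H^i(\mathrm{Cone}(\psi^*)) \to \FGH^{i+1}(G, N) \to \cdots.
\]

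The final and most delicate step is to identify $H^i(\mathrm{Cone}(\psi^*))$ with $\FhatH^i(G, N)$. The chain-level cone $\mathrm{Cone}(\psi)[-1]$ is an acyclic complex whose components are $\mathfrak{F}$-projective in every degree except one (where $K_n$ appears), and that coincides with $P_*$ above degree $n$. Splicing in the negative part of an $\mathfrak{F}$-strong $\mathfrak{F}$-complete resolution of $K_n$ below degree $0$ produces a genuine $\mathfrak{F}$-strong $\mathfrak{F}$-complete resolution of $R$ whose $\Hom_{RG}(-, N)$-dual is quasi-isomorphic to $\mathrm{Cone}(\psi^*)$ in positive degrees; well-definedness of the complete $\mathfrak{F}$-cohomology then delivers the identification. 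I expect this splice, and the verification of $\mathfrak{F}$-strongness of the resulting complex, to be the main obstacle.
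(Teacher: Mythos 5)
Your proposal takes a genuinely different route from the paper, and while the broad strategy is plausible, several steps are either mis-stated or left at the hardest point. Let me go through the issues.

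\textbf{The initial truncation is mis-stated.} You write the resolution $G_* : 0 \to K_n \to T_{n-1} \to \cdots \to T_0 \to R \to 0$, but in an $\mathfrak{F}$-complete resolution $T_*$ the module $T_0$ maps to $T_{-1}$, not to $R$; the complex $T_*$ only \emph{coincides} with an $\mathfrak{F}$-split $\mathfrak{F}$-projective resolution $P_*$ of $R$ in degrees $\ge n$. You presumably mean to take $G_* : 0 \to \Omega^n R \to P_{n-1} \to \cdots \to P_0 \to R \to 0$, where $\Omega^n R$ is identified (via the coincidence) with a cokernel in $T_*$ and is hence $\mathfrak{F}_G$-projective. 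This should be said precisely, since the entire construction hinges on the distinction between $T_*$ and $P_*$.

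\textbf{The properness argument contains a false claim.} You assert that ``the syzygies below $K_n$ have finite $\mathfrak{F}$-projective dimension'' so that Lemma~\ref{lemma:FExt(Gproj, finite Fpd) = 0} applies. This is not so: those syzygies are $\Omega^{n-1} R, \ldots, \Omega^1 R$, the ordinary syzygies of $R$ in $P_*$, and nothing in the hypotheses gives them finite $\mathfrak{F}$-projective dimension (indeed $\Fcd G$ may be infinite). The vanishing you need, $\FExt^i_{RG}(Q, \Omega^j R) = 0$ for $\mathfrak{F}_G$-projective $Q$, does follow, but by a different route: once your double-complex argument gives $\FExt^i_{RG}(Q, K_n) = 0$, one descends along the $\mathfrak{F}$-split syzygy sequences, using that $\FExt^i_{RG}(Q, P_j) = 0$ for $\mathfrak{F}$-projective $P_j$. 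So the conclusion is reachable but the reasoning as written is wrong.

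\textbf{The cone identification is the crux and is left essentially open.} You acknowledge that identifying $H^i(\mathrm{Cone}(\psi^*))$ with $\FhatH^i(G,-)$ is ``the main obstacle,'' and the sketch you give --- splicing a negative half of an $\mathfrak{F}$-strong $\mathfrak{F}$-complete resolution of $K_n$ onto $\mathrm{Cone}(\psi)[-1]$ and verifying $\mathfrak{F}$-strongness of the result --- is exactly where the work lies, and is not routine: $\mathrm{Cone}(\psi)[-1]$ has $K_n \oplus P_{n-1}$ in one degree (not just $K_n$), the splice has to be carried out in the correct degree, and one needs to check both that the resulting complex is $\mathfrak{F}$-strong and that the induced map to $P_*$ recovers your cone on $\Hom$-duals. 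Until that is done, the theorem is not proved.

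The paper sidesteps all of this with a different decomposition. It lifts $\id_R$ to an $\mathfrak{F}$-split chain map $\theta_* : T_* \to P_*$ using the Comparison Lemma~\ref{lemma:GF comparison lemma}, arranges $\theta_*$ to be surjective, and then takes the truncated-at-$0$ short exact sequence of complexes
\[
 0 \longrightarrow K_* \longrightarrow T_*\vert_{\ge 0} \longrightarrow P_*\vert_{\ge 0} \longrightarrow 0
\]
together with the cokernels $K, A, R$. Because the columns are $\mathfrak{F}$-split and the $P_i$ are $\mathfrak{F}$-projective, the kernels $K_i$ are themselves $\mathfrak{F}$-projective, so $0 \to K_{n-1} \to \cdots \to K_0 \to A \to R \to 0$ is $\mathfrak{F}_G$-proper by Proposition~\ref{prop:syzygy Fproj then FGproper} with no double-complex lemma needed. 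Applying $\Hom_{RG}(-,M)$ to the short exact sequence of complexes immediately gives the long exact sequence with $\FhatH^*$ from the middle, $\FH^*$ from the right, and $\FGH^{*+1}$ from the kernel row; there is nothing to identify by hand. If you want to pursue the cone route you will have to do the splice carefully and verify $\mathfrak{F}$-strongness of the spliced complex; the kernel-complex route avoids the issue entirely and is the better choice here.
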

\begin{proof}
 We follow the proof in \cite[\S 3]{AsadollahiBahlekehSalarian-HierachyCohomologicalDimensions}.  Consider an $\mathfrak{F}$-strong $\mathfrak{F}$-complete resolution $T_*$ coinciding with an $\mathfrak{F}$-projective $\mathfrak{F}$-split resolution $P_*$ in sufficiently high dimension.  We may choose $\theta_* : T_* \longrightarrow P_*$ to be $\mathfrak{F}$-split by Lemma \ref{lemma:GF comparison lemma} and without loss of generality we may also assume that $\theta_i$ is surjective for all $i$.

 Truncating at position $0$ and adding cokernels gives the bottom two rows of the diagram below, the row above is the row of kernels.  Note that the map $A \to R$ is necessarily surjective since the maps $T_0 \to P_0$ and $P_0 \to R$ are surjective.

\[ 
\xymatrix{
\cdots \ar[r] & 0   \ar[r] \ar[d] & K_{n-1} \ar[r] \ar[d] & \cdots \ar[r] & K_0 \ar[r] \ar[d] & K   \ar[r] \ar[d] & 0 \\
\cdots \ar[r] & T_n \ar[r] \ar[d] & T_{n-1} \ar[r] \ar[d] & \cdots \ar[r] & T_0 \ar[r] \ar[d] & A   \ar[r] \ar[d] & 0 \\
\cdots \ar[r] & P_n \ar[r]        & P_{n-1} \ar[r]        & \cdots \ar[r] & P_0 \ar[r]        & R \ar[r]        & 0
}
\]
We make some observations about the diagram:  Firstly since the module $A$ is the cokernel of a $\mathfrak{F}$-strong $\mathfrak{F}$-complete resolution, $A$ is $\mathfrak{F}_G$ projective.  Secondly in degree $i \ge 0$ the columns are $\mathfrak{F}$-split and the $P_i$ are $\mathfrak{F}$-projective, thus the $K_i$ are $\mathfrak{F}$-projective for all $i \ge 0$.  Thirdly the far right vertical short exact sequence is $\mathfrak{F}$-split since the degree $0$ column and the rows are $\mathfrak{F}$-split. Finally the top row is exact and $\mathfrak{F}$-split since the other two rows are.  

Apply the functor $\Hom_{RG} (-, M)$ for an arbitrary $R G$-module $M$ and take homology.  This gives a long exact sequence
\[ \cdots \longrightarrow  \FH^i (G, M) \longrightarrow \FhatH^i (G, M) \longrightarrow H^i\Hom_{RG}(K_*, M) \longrightarrow \cdots \]
We can simplify the right hand term:
\begin{align*}
H^i\Hom_{RG}(K_*, M) &\cong \FGExt^i_{RG}(K, M) \\
&\cong \FGH^{i+1}(G, M)
\end{align*}
 Where the first isomorphism is because, by Proposition \ref{prop:syzygy Fproj then FGproper}, the top row is $\mathfrak{F}_G$-proper.  For the second isomorphism note that the short exact sequence
\[ 0 \longrightarrow K \longrightarrow A \longrightarrow R \longrightarrow 0 \]
is $\mathfrak{F}_G$-proper by Proposition \ref{prop:syzygy Fproj then FGproper}, so 
\[ 0 \longrightarrow K_{n-1} \longrightarrow \cdots \longrightarrow K_0 \longrightarrow A \longrightarrow R \longrightarrow 0 \]
is an $\mathfrak{F}_G$-proper $\mathfrak{F}_G$-projective resolution of $R$.  Thus the second isomorphism follows from the short exact sequence and Lemma \ref{lemma:long exact sequences in FGExt}.
\end{proof}

\begin{Cor}
 If $R$ admits an $\mathfrak{F}$-strong $\mathfrak{F}$-complete resolution then $\FGcd G < \infty$.
\end{Cor}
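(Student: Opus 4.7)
The plan is simply to read off the conclusion from the construction already carried out in the proof of Theorem \ref{theorem:FAM LES}.  That argument, starting from an $\mathfrak{F}$-strong $\mathfrak{F}$-complete resolution $T_*$ of $R$ with coincidence index $n$ and comparing it to the standard $\mathfrak{F}$-split $\mathfrak{F}$-projective resolution $P_*$, explicitly produces the finite complex
\[ 0 \longrightarrow K_{n-1} \longrightarrow \cdots \longrightarrow K_0 \longrightarrow A \longrightarrow R \longrightarrow 0, \]
and verifies along the way that $A$ is $\mathfrak{F}_G$-projective, that each $K_i$ is $\mathfrak{F}$-projective, and that the whole complex is $\mathfrak{F}_G$-proper.

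The single remaining point is to record that $\mathfrak{F}$-projectives are themselves $\mathfrak{F}_G$-projective, so that the $K_i$ qualify as $\mathfrak{F}_G$-projective.  For any $\mathfrak{F}$-projective $Q$, the complex $\cdots \to 0 \to Q \stackrel{\id}{\to} Q \to 0 \to \cdots$ with the two copies in degrees $0$ and $-1$ is trivially an acyclic $\mathfrak{F}$-split complex of $\mathfrak{F}$-projectives, and $\Hom_{RG}$ into any $\mathfrak{F}$-projective remains acyclic, so it is an $\mathfrak{F}$-strong $\mathfrak{F}$-complete resolution of $Q$ whose cokernel at degree $0$ is $Q$.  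Combined with the preceding paragraph, this exhibits the displayed complex as an $\mathfrak{F}_G$-proper resolution of $R$ by $\mathfrak{F}_G$-projectives of finite length, which bounds $\FGcd G = \FGpd R$ and in particular shows it to be finite.

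I expect no genuine obstacle: the content of the corollary is already embedded in Theorem \ref{theorem:FAM LES}, and its proof amounts to a short bookkeeping remark extracting the length of the resolution constructed there.
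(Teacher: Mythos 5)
Your proof is correct and follows essentially the same route as the paper, which simply reads off the finite-length $\mathfrak{F}_G$-proper $\mathfrak{F}_G$-projective resolution already built in the proof of Theorem \ref{theorem:FAM LES}. Your additional observation that $\mathfrak{F}$-projectives are $\mathfrak{F}_G$-projective (so the $K_i$ qualify) is a small gap in the paper's terse proof that you correctly identified and filled.
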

\begin{proof}
 In the proof of the theorem we assumed an $\mathfrak{F}$-strong $\mathfrak{F}$-complete resolution of $R$ and built a finite length $\mathfrak{F}_G$-proper resolution of $R$ by $\mathfrak{F}_G$-projectives.
\end{proof}

\begin{Prop}\label{prop:AM seqs diagram}
If the Avramov--Martsinkovsky long exact sequence and the long exact sequence of Theorem \ref{theorem:FAM LES} both exist, there is a commutative diagram:
\[
\xymatrix{
 \cdots \ar[r] & \FhatH^{n-1} \ar^{\gamma_{n-1}}[d] \ar[r] & \FGH^n \ar[r] \ar^{\alpha_n}[d] & \FH^n \ar[r] \ar^{\beta_n}[d] & \FhatH^n \ar^{\gamma_n}[d] \ar[r] & \FGH^{n+1} \ar[r] \ar^{\alpha_{n+1}}[d] & \cdots \\
 \cdots \ar[r] &  \widehat{H}^{n-1} \ar[r] & \GH^n \ar[r] \ar_{\eta_n}[ru] & H^n \ar[r] & \widehat{H}^n \ar[r]   & \GH^{n+1} \ar[r] \ar[ru] & \cdots \\
}
\]
Where for conciseness we have written $H^n$ for $H^n(G, -)$ etc.
\end{Prop}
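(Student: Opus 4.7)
The plan is to realize both long exact sequences as arising from a common morphism of short exact sequences of chain complexes, then to establish commutativity of the squares by naturality and of the triangles by uniqueness up to chain homotopy of lifts.

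Fix the data underlying both sequences: an $\mathfrak{F}$-strong $\mathfrak{F}$-complete resolution $T^{\mathfrak{F}}_*$ and a strong complete resolution $T_*$ of $R$; an $\mathfrak{F}$-split $\mathfrak{F}$-projective resolution $P^{\mathfrak{F}}_*$ and a projective resolution $P_*$ of $R$; and surjective comparison maps $\theta^{\mathfrak{F}}: T^{\mathfrak{F}}_* \to P^{\mathfrak{F}}_*$ (chosen $\mathfrak{F}$-split, as in the proof of Theorem \ref{theorem:FAM LES}) and $\theta: T_* \to P_*$. Using projectivity of the $P_i$ and acyclicity of $P^{\mathfrak{F}}_* \to R$, lift $\id_R$ to a chain map $\pi: P_* \to P^{\mathfrak{F}}_*$.

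The first task is to build a chain map $\tau: T_* \to T^{\mathfrak{F}}_*$ satisfying $\theta^{\mathfrak{F}}\tau = \pi\theta$. In sufficiently high degrees, where both pairs coincide with their respective resolutions, take $\tau_i = \pi_i$; working downward, exploit that each $T_i$ is projective and hence $\mathfrak{F}$-projective (as $RG$ is a summand of $R\Delta$) and that the $\mathfrak{F}$-strong condition on $T^{\mathfrak{F}}_*$ makes $\Hom_{RG}(T_i, T^{\mathfrak{F}}_*)$ acyclic. Combined with termwise surjectivity of $\theta^{\mathfrak{F}}$ this allows an inductive solution of the chain-map condition together with the compatibility constraint. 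Taking kernels degreewise yields a chain map $\kappa: K_* \to K^{\mathfrak{F}}_*$ and a morphism of short exact sequences of complexes with vertical arrows $\kappa, \tau, \pi$. Applying $\Hom_{RG}(-, M)$ and passing to the long exact sequences of cohomology, naturality of the connecting homomorphism produces the commutativity of every horizontal square in the diagram, with vertical maps $\alpha_n, \beta_n, \gamma_n$ induced respectively by $\kappa, \pi, \tau$ under the identifications coming from Theorems \ref{theorem:AM LES} and \ref{theorem:FAM LES}.

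For the diagonal $\eta_n: \GH^n \to \FH^n$, note that each $P^{\mathfrak{F}}_i$ is Gorenstein projective (being an $\mathfrak{F}$-projective, as recorded in the chain of inequalities in Section \ref{section:introduction}). Let $L_* = (\cdots \to K_0 \to B \to R)$ denote the G-proper Gorenstein projective resolution of $R$ arising from the AM construction. By G-properness of $L_*$ together with Gorenstein projectivity of the $P^{\mathfrak{F}}_i$, the identity on $R$ lifts to a chain map $\varepsilon: P^{\mathfrak{F}}_* \to L_*$, and we define $\eta_n$ as the map induced on cohomology. The two triangle-commutativity statements now reduce to uniqueness up to chain homotopy of lifts of $\id_R$: the composite $\varepsilon\pi: P_* \to L_*$ is a chain-map lift of $\id_R$ and so induces on cohomology the bottom-row map $\GH^n \to H^n$, giving $\beta_n\eta_n = (\GH^n \to H^n)$; dually $\kappa$, together with the induced map $B \to A$ on cokernels, assembles into a chain map $\lambda: L_* \to L^{\mathfrak{F}}_*$ of $\mathfrak{F}_G$-proper resolutions of $R$, and $\lambda\varepsilon: P^{\mathfrak{F}}_* \to L^{\mathfrak{F}}_*$ induces the top-row map $\FGH^n \to \FH^n$, giving $\eta_n\alpha_n = (\FGH^n \to \FH^n)$.

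The main obstacle is constructing $\tau$ compatibly with $\pi$, $\theta$, and $\theta^{\mathfrak{F}}$ on the nose: the existence of \emph{some} chain map $T_* \to T^{\mathfrak{F}}_*$ lifting $\id_R$ is routine, but enforcing $\theta^{\mathfrak{F}}\tau = \pi\theta$ rather than commutativity merely up to homotopy requires the careful inductive construction above, making essential use of both projectivity of the $T_i$ and the $\mathfrak{F}$-strong property of $T^{\mathfrak{F}}_*$.
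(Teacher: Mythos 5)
Your proposal follows the same overall strategy as the paper: realize both long exact sequences as coming from a morphism of short exact sequences of chain complexes, and deduce the triangle identities from uniqueness of comparison maps up to chain homotopy. There is one point where your justification misfires, though the conclusion you draw from it is correct. You assert that ``the $\mathfrak{F}$-strong condition on $T^{\mathfrak{F}}_*$ makes $\Hom_{RG}(T_i, T^{\mathfrak{F}}_*)$ acyclic.'' The $\mathfrak{F}$-strong condition is that $\Hom_{RG}(T^{\mathfrak{F}}_*, Q)$ is exact for $\mathfrak{F}$-projective $Q$ --- it says nothing about $\Hom_{RG}(Q, T^{\mathfrak{F}}_*)$. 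What you actually need is that $T^{\mathfrak{F}}_*$ is an acyclic $\mathfrak{F}$-\emph{split} complex: since $T_i$ is projective and hence $\mathfrak{F}$-projective, and $\mathfrak{F}$-projectives are precisely the projective objects with respect to $\mathfrak{F}$-split sequences, $\Hom_{RG}(T_i, T^{\mathfrak{F}}_*)$ is exact. With that substitution the inductive construction of $\tau$ goes through. Beyond this, your treatment differs from the paper in two minor ways, both benign: the paper constructs $\eta_n$ by lifting along an $\mathfrak{F}$-projective resolution into a G-proper resolution using Lemma \ref{lemma:Gproper res of R is Fsplit} (that G-proper resolutions of $R$ are $\mathfrak{F}$-split), whereas you use that $\mathfrak{F}$-projectives are Gorenstein projective together with G-properness of $L_*$; and you are more explicit than the paper about arranging strict (not merely up-to-homotopy) commutativity of the chain-level diagram, which the paper passes over by simply invoking the comparison lemmas for $\beta$ and $\gamma$.
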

\begin{proof}
The construction of the Avramov--Martsinkovsky long exact sequence is analogous to the proof of Theorem \ref{theorem:AM LES}, we give a quick sketch below as we will need the notation.  Take a strong complete resolution $T^\prime_*$ of $R$ coinciding with a projective resolution $P_*^\prime$ in high dimensions and let $A^\prime$ be the zeroth cokernel of $T_*^\prime$.  Thus $A^\prime$ is Gorenstein projective.  Again, the map $T_*^\prime \to P_*^\prime$ is assumed surjective and the kernel $K_*^\prime$ is a projective resolution of $K^\prime$, the kernel of the map $A^\prime \longrightarrow R$.  Applying $\Hom_{RG}(-, M)$, for some $RG$-module $M$, to the short exact sequence of complexes 
\[0 \longrightarrow K_* \longrightarrow T_* \longrightarrow P_* \longrightarrow 0  \] 
gives the Avramov--Martsinkovsky long exact sequence.

Let $T_*$, $P_*$, $K_*$, $K$ and $A$ be as defined in the proof of Theorem \ref{theorem:AM LES}.  There is a commutative diagram of chain complexes
\[ 
\xymatrix{
0 \ar[r] & {K}_* \ar[r]  & {T}_* \ar[r] & {P}_* \ar[r] & 0\\
0 \ar[r] & {K}_*^\prime \ar[r] \ar^\alpha[u] & {T}_*^\prime \ar[r] \ar^\gamma[u] & {P}_*^\prime \ar[r] \ar^\beta[u] & 0 
}\]

Where the maps $\beta$ exists by the comparison theorem for projective resolutions and $\gamma$ exists by the comparison theorem for strong complete resolutions \cite[Lemma 2.4]{CornickKropholler-OnCompleteResolutions}.   The map $\alpha$ is the induced map on the kernels.  Applying $\Hom_{RG}(-, M)$ for some $RG$-module $M$, and taking homology, the maps $\alpha$, $\beta$ and $\gamma$ induce the maps $\alpha_*$, $\beta_*$ and $\gamma_*$.

Finally we construct the map $\eta_n: \GH^n(G, -) \longrightarrow \FH^n(G, -)$.  Let $B_*$ be a G-proper Gorenstein projective resolution and recall $P_*$ is an $\mathfrak{F}$-split resolution by $\mathfrak{F}$-projectives.  Then $B_*$ is $\mathfrak{F}$-split (Lemma \ref{lemma:Gproper res of R is Fsplit}) so there is a chain map  $P_* \to B_*$ inducing $\eta_*$ on cohomology.

Commutativity is obvious for the diagram with the maps $\eta_i$ removed, leaving us with two relations to prove.  Let 
$$\varepsilon_n^G : \GH^n(G, -) \longrightarrow H^n(G, -)$$
denote the map from the commutative diagram.  This is the map induced by comparison of a resolution of Gorenstein projectives and ordinary projectives \cite[3.2,3.11]{AsadollahiBahlekehSalarian-HierachyCohomologicalDimensions}.  We get $\beta_* \circ \eta_* = \varepsilon^G_* $, since all the maps are induced by comparison of resolutions, and such maps are unique up to chain homotopy equivalence.

The final commutativity relation, that $\eta_* \circ \alpha_* = \varepsilon_*^{\mathfrak{F}_G} $, is the most difficult to show.  Here 
\[\varepsilon^{\mathfrak{F}_G}_n: \FGH^n(G, -) \longrightarrow \FH^n(G, -)\]
denotes the map from the commutative diagram, it is induced by comparison of resolutions.

Here is a commutative diagram showing the resolutions involved:
\[
\xymatrix@-15pt{
& 0 \ar[rr] & & K \ar[rr] & & A \ar[rr] & & R \ar[rr] & & 0 \\
0 \ar[rr] & & K_* \ar[rr] \ar[ru] & \ar[u] & T_* \ar[ru] \ar[rr] & \ar[u] & P_* \ar[ru] \ar[rr] & \ar[u] & 0 & \\
& 0 \ar@{-}[r] & \ar[r] & K^\prime \ar@{-}[r] \ar@{-}[u] & \ar[r] & A^\prime \ar@{-}[r] \ar@{-}[u] & \ar[r] & R \ar@{-}[u] \ar[rr] & & 0 \\
0 \ar[rr] & & K_*^\prime \ar[rr] \ar[ru] \ar[uu] & & T_*^\prime \ar[rr] \ar[ru] \ar[uu] & & P_*^\prime \ar[rr] \ar[ru] \ar[uu] & & 0
}
\]

Let $L_*$ be the chain complex defined by $L_i = K_{i-1}$ for all $i \ge 1$ and $L_0 = A$, with boundary map at $i =1$ the composition of the maps $K_0 \to K$ and $K \to A$.  Thus $L_*$ is acyclic except at degree zero where $H_0L_* = R$.  Similarly let $L_*^\prime$ denote chain complex with $L_i^\prime = K_{i-1}^\prime$ for all $i \ge 1$ and $L_0^\prime = A^\prime$ augmented by $A^\prime$, so $L_*^\prime$ is acyclic except at degree zero where $H_0L_*^\prime = R$.  Note that $L_*$ is an $\mathfrak{F}_G$-proper resolution of $R$ by Proposition \ref{prop:syzygy Fproj then FGproper} and $L_*^\prime$ is a G-proper resolution of $R$ by the Gorenstein cohomology version of the same proposition.

Recall that the maps $\varepsilon_*^{\mathfrak{F}_G}$ and $\eta_*$ are induced by comparison of resolutions:  $\varepsilon^{\mathfrak{F}_G}_*$ is induced by a map $P_* \to L_*$ and $\eta_*$ is induced by a map $P_* \to L_*^\prime$.  The map 
\[\FGExt^i_{RG}(K, -) \longrightarrow \GExt^i_{RG}(K^\prime, -)\]
is induced by $\alpha: K_*^\prime \longrightarrow K_*$.  Thus the map 
\[\alpha_* : \FGH^n(G, -) \longrightarrow \GH^n(G, -)\]
is induced by $L_*^\prime \longrightarrow L_*$.  The diagram below is the one we must show commutes.  
\[
\xymatrix{
\FGH^n(G, -) \cong H^n\Hom_{RG}(L_* , -) \ar^{\alpha_n}[d] \ar^{\varepsilon_n^{\mathfrak{F}_G}}[r] & \FH^n(G, -) \cong H^n \Hom_{RG}(P_* , -) \\
\GH^n(G, -) \cong H^n\Hom_{RG}(L^\prime_* , -) \ar_{\eta_n}[ur] & \\
}
\]
Since the composition $P_*$ to $L^\prime_*$ to $L_*$ is a map of resolutions from $P_*$ to $L_*$, and such maps are unique up to chain homotopy equivalence, this completes the proof.
\end{proof}
\begin{Cor}
Given an $\mathfrak{F}$-strong $\mathfrak{F}$-complete resolution of $R$, $\Gcd G = n < \infty$ implies $\FH^i(G, -)$ injects into $\FhatH^i(G, -)$ for all $i \ge n+1$.
\end{Cor}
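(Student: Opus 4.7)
The plan is to read the injectivity statement off from the long exact sequence of Theorem \ref{theorem:FAM LES} via the commutative diagram of Proposition \ref{prop:AM seqs diagram}. By exactness at $\FH^i(G,-)$ in
\[ \FGH^i(G,-) \xrightarrow{\varepsilon_i^{\mathfrak{F}_G}} \FH^i(G,-) \longrightarrow \FhatH^i(G,-), \]
showing $\FH^i(G,-) \hookrightarrow \FhatH^i(G,-)$ is equivalent to showing $\varepsilon_i^{\mathfrak{F}_G} = 0$ for $i \ge n+1$. So the goal reduces to killing the connecting map $\varepsilon_i^{\mathfrak{F}_G}$ in high degree.

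To do this, I would invoke the factorisation $\varepsilon_i^{\mathfrak{F}_G} = \eta_i \circ \alpha_i$ established in the proof of Proposition \ref{prop:AM seqs diagram}, which routes this map through the ordinary Gorenstein cohomology:
\[ \FGH^i(G,-) \xrightarrow{\alpha_i} \GH^i(G,-) \xrightarrow{\eta_i} \FH^i(G,-). \]
Note that both long exact sequences appearing in Proposition \ref{prop:AM seqs diagram} are available: the $\mathfrak{F}$-version exists by the hypothesis on $\mathfrak{F}$-strong $\mathfrak{F}$-complete resolutions (which also gives $\FGcd G < \infty$ via the preceding corollary), and the classical Avramov--Martsinkovsky sequence exists by the assumption $\Gcd G < \infty$.

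Next I would observe that $\Gcd G = n < \infty$ forces $\GH^i(G,-) = 0$ for every $i \ge n+1$. Indeed, Theorem \ref{theorem:Holms theorem on proper vs non proper res} of Holm ensures that $R$ admits a G-proper Gorenstein projective resolution of length $\Gcd G = n$, and $\GH^*(G,-)$ may be computed from such a resolution, hence vanishes above degree $n$. Combined with the factorisation, this immediately yields $\varepsilon_i^{\mathfrak{F}_G} = \eta_i \circ \alpha_i = 0$ for $i \ge n+1$, which is exactly what we needed.

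There is no real obstacle here: the Corollary is essentially a formal consequence of the commutative diagram of Proposition \ref{prop:AM seqs diagram} together with Holm's theorem. The only point worth verifying carefully is that $\varepsilon_i^{\mathfrak{F}_G}$ as it appears in Proposition \ref{prop:AM seqs diagram} really is the middle map in the long exact sequence of Theorem \ref{theorem:FAM LES}, but this is exactly how $\varepsilon_i^{\mathfrak{F}_G}$ was identified in the proof of that proposition.
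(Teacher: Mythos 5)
Your argument is correct and follows essentially the same route as the paper: both exploit the factorisation $\varepsilon_i^{\mathfrak{F}_G} = \eta_i \circ \alpha_i$ through $\GH^i(G,-)$ from Proposition \ref{prop:AM seqs diagram} and the vanishing $\GH^i(G,-)=0$ for $i \ge n+1$. Your extra appeal to Holm's theorem to justify that vanishing is a welcome explicit step which the paper leaves implicit.
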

\begin{proof}
$\Gcd G < \infty$ implies the Avramov--Martsinkovsky long exact sequence exists (Theorem \ref{theorem:AM LES}). Consider the the commutative diagram of Proposition \ref{prop:AM seqs diagram}.  The map
\[ \FGH^i(G, -) \longrightarrow \FH^i(G, -) \]
factors as $\eta_i \circ \alpha_i = 0$, so since $\GH^{i}(G, -) = 0$ for all $i \ge n+1$, $\FH^i(G, -)$ injects into $\FhatH^i(G, -)$ for all $i \ge n+1$.  
\end{proof}

\begin{Theorem}\label{thm:Fcd finite then Fcd=Gcd}
 If $\Fcd G < \infty$ then $\Fcd G = \Gcd G$.
\end{Theorem}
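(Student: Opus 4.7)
The plan is to combine the corollary immediately preceding the theorem with the observation that $\FhatH^*(G, -)$ vanishes identically whenever $\Fcd G < \infty$.

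First, recall from the introduction that $\Gcd G \le \Fcd G$ for every group, so the hypothesis already gives $\Gcd G \le \Fcd G < \infty$. It therefore suffices to establish the reverse inequality $\Fcd G \le \Gcd G$.

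Second, I would exhibit an explicit $\mathfrak{F}$-strong $\mathfrak{F}$-complete resolution of $R$ and use it to compute $\FhatH^*(G,-)$. Since $\Fcd G < \infty$, the trivial module $R$ admits a finite-length $\mathfrak{F}$-split $\mathfrak{F}$-projective resolution $P_*$. The zero complex $T_* = 0$ is acyclic, trivially $\mathfrak{F}$-split, composed of $\mathfrak{F}$-projectives, and coincides with $P_*$ in all sufficiently high degrees; moreover $\Hom_{RG}(T_*, Q) = 0$ is trivially exact for every $\mathfrak{F}$-projective $Q$. So $T_*$ qualifies as an $\mathfrak{F}$-strong $\mathfrak{F}$-complete resolution of $R$, and computing complete $\mathfrak{F}$-cohomology from it yields $\FhatH^i(G, -) = 0$ for every $i$.

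Third, since we also have $\Gcd G < \infty$, the hypotheses of the corollary preceding this theorem are satisfied with $n = \Gcd G$. That corollary then supplies an injection $\FH^i(G, -) \hookrightarrow \FhatH^i(G, -) = 0$ for all $i \ge \Gcd G + 1$, forcing $\Fcd G \le \Gcd G$. Combined with the reverse inequality, this gives $\Fcd G = \Gcd G$.

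The only potentially subtle step is the second one: verifying that the degenerate zero complex genuinely satisfies the definition of an $\mathfrak{F}$-strong $\mathfrak{F}$-complete resolution from Section~\ref{subsection:complete F cohomology}, and that Nucinkis's various constructions of complete $\mathfrak{F}$-cohomology (which agree with the resolution-based definition whenever the latter is available) all return zero in this situation. Once those definitional checks are in place, the theorem follows immediately from the corollary and the inequality $\Gcd G \le \Fcd G$ already recorded in the introduction.
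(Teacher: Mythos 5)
Your proposal is correct and matches the paper's argument: establish $\Gcd G \le \Fcd G$, note that finiteness of $\Fcd G$ makes the zero complex an $\mathfrak{F}$-strong $\mathfrak{F}$-complete resolution of $R$ (so the preceding corollary applies and $\FhatH^i(G,-)=0$), and conclude $\Fcd G \le \Gcd G$. The only cosmetic difference is that the paper obtains the vanishing of $\FhatH^*$ by citing Kropholler, whereas you derive it directly by computing complete $\mathfrak{F}$-cohomology from the zero resolution; both routes are equally valid.
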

\begin{proof}~
 We know already that $\Gcd G \le \Fcd G$ (see Section \ref{section:introduction}).  If $\Fcd G < \infty$ then it is trivially true that $\mathfrak{F}$ admits an $\mathfrak{F}$-strong $\mathfrak{F}$-complete resolution, thus $\FH^i(G, -)$ injects into $\FhatH^i(G, -)$ for all $i \ge \Gcd G + 1$, but $\FhatH^i(G, -)$ is always zero since $\Fcd G < \infty$ \cite[4.1(i)]{Kropholler-OnGroupsOfTypeFP_infty}.
\end{proof}

\begin{Example}\label{example:HF} 
Let $R = \ZZ$ for this example.  Kropholler introduced the class $\HF$ of hierarchically decomposable groups in \cite{Kropholler-OnGroupsOfTypeFP_infty} as the smallest class of groups such that if there exists a finite dimensional contractible $G$-CW complex with stabilisers in $\HF$ then $G \in \HF$.  Let $\HF_b$ denote the subclass of $\HF$ containing groups with a bound on the orders of their finite subgroups.

The $\ZZ G$-module $B(G, \ZZ)$ of bounded functions from $G$ to $\ZZ$ was first studied in \cite{KrophollerTalelli-PropertyOfFundamentalGroupsOfGraphOfFiniteGroups}, Kropholler and Mislin proved that if $G$ is $\HF$ with a bound on lengths of chains of finite subgroups and $\pd_{\ZZ G} B(G, \ZZ ) < \infty$ then $\ucd G < \infty$, in particular $\Fcd G < \infty$ \cite{KrophollerMislin-GroupsOnFinDimSpacesWithFinStab}.  If $\Gcd G < \infty$ then $\pd_{\ZZ G}B(G, \ZZ ) < \infty$ \cite[2.10]{AsadollahiBahlekehSalarian-HierachyCohomologicalDimensions}\cite[Theorem C]{CornickKropholler-HomologicalFinitenessConditions}.  Thus if $G \in \HF_b$ then $\Gcd G = \Fcd G$.
\end{Example}

\section{Group Extensions}\label{section:Group Extensions}

In \cite[Theorem 6.2]{Me-CohomologicalMackey} the author shows that for all groups $\Fcd G =\HeckeFcd G $, where $\HeckeFcd G$ denotes the Bredon cohomological dimension of $G$ with coefficients restricted to cohomological Mackey functors.  The invariant $\HeckeFcd G$ was studied by Degrijse in \cite{Degrijse-ProperActionsAndMackeyFunctors} where he proves the following (though stated for $\HeckeFcd G$ not $\Fcd G$):  
\begin{Theorem}\cite[Theorem B]{Degrijse-ProperActionsAndMackeyFunctors}
 Given a short exact sequence of groups
$$ 1 \longrightarrow N \longrightarrow G \longrightarrow Q \longrightarrow 1  $$
such that every finite index overgroup of $N$ in $G$ has a bound on the orders of the finite subgroups not contained in $N$.  If $\Fcd G < \infty$ then $ \Fcd G \le \Fcd N + \Fcd Q $.
\end{Theorem}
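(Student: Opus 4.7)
The plan is to reduce the statement to the subadditivity of the Gorenstein cohomological dimension. First, if either $\Fcd N$ or $\Fcd Q$ is infinite, the inequality holds trivially, so I would restrict attention to the case where both are finite. Combining this with the hypothesis $\Fcd G < \infty$ and the inequality $\Gcd H \le \Fcd H$ valid for every discrete group $H$ (see Section \ref{section:introduction}), we obtain $\Gcd G, \Gcd N, \Gcd Q < \infty$ as well.

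Next, since $\Fcd G < \infty$, Theorem \ref{thm:Fcd finite then Fcd=Gcd} identifies $\Fcd G$ with $\Gcd G$. The target inequality then factors as
\[\Fcd G \;=\; \Gcd G \;\le\; \Gcd N + \Gcd Q \;\le\; \Fcd N + \Fcd Q,\]
where the last step is a second application of $\Gcd \le \Fcd$, this time to $N$ and $Q$ separately.

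The heart of the argument, and the main obstacle, is the middle inequality: the subadditivity of the Gorenstein cohomological dimension under arbitrary group extensions, $\Gcd G \le \Gcd N + \Gcd Q$. This is precisely the ``Gorenstein cohomological dimension is subadditive'' input advertised in the introduction of Section \ref{section:Group Extensions}. The point of using the Gorenstein version here, rather than attempting to bound $\Fcd G$ directly via an $\mathfrak{F}$-split Lyndon--Hochschild--Serre argument, is that $\Gcd$-subadditivity is known in greater generality than its $\Fcd$ counterpart: Degrijse's bookkeeping over finite subgroups in finite-index overgroups of $N$ is needed to control $\Fcd$ but not to control $\Gcd$. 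Once $\Gcd$-subadditivity is invoked (or carefully cited from the literature on Gorenstein homological algebra of groups), the three inequalities chain together and the corollary follows immediately.
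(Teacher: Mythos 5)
Your argument is correct, but it is worth being precise about what it proves and what the paper does. The statement you were asked to prove is Degrijse's Theorem B, which the paper \emph{cites} rather than proves; Degrijse's own argument is Bredon-cohomological, working with cohomological Mackey functors, and it genuinely uses the hypothesis on finite index overgroups of $N$. The paper never reproduces that proof. Instead, immediately after quoting Degrijse's theorem, it proves Corollary \ref{cor:Fcd subadditive if finite}, which is the same conclusion with the finite-subgroups hypothesis deleted, and the paper's proof of that Corollary is word for word the chain you wrote: when $\Fcd N$ and $\Fcd Q$ are both finite,
\[
\Fcd G = \Gcd G \le \Gcd N + \Gcd Q \le \Fcd N + \Fcd Q,
\]
using Theorem \ref{thm:Fcd finite then Fcd=Gcd} for the equality, subadditivity of $\Gcd$ under arbitrary extensions (\cite[Remark 2.9(2)]{BahlekehDembegiotiTalelli-GorensteinDimensionAndProperActions}) for the middle step, and $\Gcd \le \Fcd$ for the ends. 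You correctly never touch the overgroup hypothesis; that is the whole point of the Corollary. So your proposal is a valid and strictly stronger derivation of the cited statement, coinciding with the paper's route to Corollary \ref{cor:Fcd subadditive if finite}, but it is a genuinely different argument from Degrijse's original proof of the statement as cited. One small caution: since the chain invokes Theorem \ref{thm:Fcd finite then Fcd=Gcd}, this argument cannot be used as a standalone proof of Degrijse's theorem in any context where that theorem would be needed as an input to Theorem \ref{thm:Fcd finite then Fcd=Gcd} — fortunately it is not, so there is no circularity within the paper.
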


Since Gorenstein cohomological dimension is subadditive under extensions \cite[Remark 2.9(2)]{BahlekehDembegiotiTalelli-GorensteinDimensionAndProperActions}, an application of Theorem \ref{thm:Fcd finite then Fcd=Gcd} removes the condition on the orders of finite subgroups:
\begin{Cor}\label{cor:Fcd subadditive if finite}
 Given a short exact sequence of groups
\[ 1 \longrightarrow N \longrightarrow G \longrightarrow Q \longrightarrow 1  \]
If $\Fcd G < \infty$ then $ \Fcd G \le \Fcd N + \Fcd Q $.
\end{Cor}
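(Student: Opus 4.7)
The plan is to combine Theorem \ref{thm:Fcd finite then Fcd=Gcd} with the known subadditivity of the Gorenstein cohomological dimension under group extensions from \cite[Remark 2.9(2)]{BahlekehDembegiotiTalelli-GorensteinDimensionAndProperActions}. First I would dispose of the trivial case: if either $\Fcd N = \infty$ or $\Fcd Q = \infty$, then the right-hand side $\Fcd N + \Fcd Q$ is infinite and the desired inequality holds vacuously. Hence we may assume throughout that both $\Fcd N$ and $\Fcd Q$ are finite, and of course $\Fcd G < \infty$ is the standing hypothesis.

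Under these finiteness assumptions, Theorem \ref{thm:Fcd finite then Fcd=Gcd} applies simultaneously to $G$, $N$, and $Q$, yielding the three identifications
\[ \Fcd G = \Gcd G, \qquad \Fcd N = \Gcd N, \qquad \Fcd Q = \Gcd Q. \]
Invoking subadditivity of the Gorenstein cohomological dimension for the extension $1 \to N \to G \to Q \to 1$ then gives $\Gcd G \le \Gcd N + \Gcd Q$, and substituting back the identifications produces $\Fcd G \le \Fcd N + \Fcd Q$, which is exactly what we want.

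There is essentially no serious obstacle in this deduction: the heavy lifting has already been carried out in Theorem \ref{thm:Fcd finite then Fcd=Gcd}, which in turn rests on the construction of the $\mathfrak{F}_G$-cohomology and the Avramov--Martsinkovsky style long exact sequence of Theorem \ref{theorem:FAM LES}. The whole purpose of Theorem \ref{thm:Fcd finite then Fcd=Gcd} in this context is to transport subadditivity from the Gorenstein setting, where it is already established, to the $\mathfrak{F}$-setting, and thereby eliminate the hypothesis on the orders of finite subgroups not contained in $N$ that Degrijse had to impose in \cite[Theorem B]{Degrijse-ProperActionsAndMackeyFunctors}.
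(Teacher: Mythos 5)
Your proof is correct and follows essentially the same route as the paper: transport Degrijse's statement to the Gorenstein setting via Theorem \ref{thm:Fcd finite then Fcd=Gcd} and use the known subadditivity of $\Gcd$ under extensions. One small economy you could make: instead of the case split and three applications of Theorem \ref{thm:Fcd finite then Fcd=Gcd}, it suffices to apply it only to $G$ and then use the unconditional inequality $\Gcd \le \Fcd$ (noted in the introduction) for $N$ and $Q$, giving $\Fcd G = \Gcd G \le \Gcd N + \Gcd Q \le \Fcd N + \Fcd Q$ directly.
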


\begin{Remark}
Even in the case that $\Fcd Q < \infty$ and $N$ is finite it is unknown if $\Fcd G < \infty$.  However, if it fails in such a case then it necessarily fails when $N$ is a cyclic group of order $p$ \cite[Lemma 6.10]{Me-CohomologicalMackey}. 
\end{Remark}

\section{Rational Cohomological Dimension}\label{section:cdQ}

For this section, let $R = \ZZ$.  Gandini has shown that for groups in $\HF$, $\cd_{\QQ} G \le \Gcd G$ \cite[Remark 4.14]{Gandini-CohomologicalInvariants} and this is the only result we are aware of relating $\cd_{\QQ}G$ and $\Gcd G$.  In Proposition \ref{prop:cdQG finite then cdQG le GcdG} we show that $\cd_{\QQ}G \le \Gcd G$ for all groups with $\cd_{\QQ} G < \infty$.  Recall there are examples of torsion-free groups with $\cd_{\QQ} G < \cd_{\ZZ} G$ \cite[Example 8.5.8]{Davis} and $\Gcd G = \cd_{\ZZ} G$ whenever $\cd_{\ZZ} G < \infty$ \cite[Corollary 2.9]{AsadollahiBahlekehSalarian-HierachyCohomologicalDimensions}, so we cannot hope for equality of $\cd_{\QQ} G$ and $\Gcd G $ in general.

\begin{Question}
 Are there groups $G$ with $\Gcd G < \infty$ but $\cd_{\QQ} G = \infty$?
\end{Question}

\begin{Lemma}
 For any group $G$, $\silp \QQ G \le \silp \ZZ G$.
\end{Lemma}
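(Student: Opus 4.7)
The plan is to fix a projective $\QQ G$-module $P$ and show that its injective dimension over $\QQ G$ is at most $n := \silp \ZZ G$; taking the supremum over all such $P$ then yields the claim. The key observation is that although $P$ need not be projective over $\ZZ G$, it has $\ZZ G$-projective dimension at most one, and this small defect can be absorbed by the long exact sequence of $\Ext_{\ZZ G}$.

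First I would show $\pd_{\ZZ G}(P) \le 1$. The $\ZZ$-module $\QQ$ has a length one free resolution (for example, the mapping telescope coming from $\QQ = \varinjlim \tfrac{1}{n!}\ZZ$). Tensoring over $\ZZ$ with the $\ZZ$-free ring $\ZZ G$ produces a length one $\ZZ G$-free resolution of $\QQ G$, and extending to arbitrary $\QQ G$-free modules and passing to summands gives $\pd_{\ZZ G}(P) \le 1$. Consequently $P$ sits in a short exact sequence $0 \to Q_1 \to Q_0 \to P \to 0$ with $Q_0, Q_1$ projective $\ZZ G$-modules.

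Second, for every $\QQ G$-module $M$ I would establish the change-of-rings identification $\Ext^i_{\QQ G}(M, P) \cong \Ext^i_{\ZZ G}(M, P)$. The point is that if $F_* \to M$ is a $\ZZ G$-projective resolution then $F_* \otimes_\ZZ \QQ \to M$ is a $\QQ G$-projective resolution, using that $\QQ$ is $\ZZ$-flat and $M \otimes_\ZZ \QQ \cong M$ because $M$ is already a $\QQ$-module; the extension-of-scalars adjunction then gives $\Hom_{\QQ G}(F_i \otimes_\ZZ \QQ, P) \cong \Hom_{\ZZ G}(F_i, P)$ on each term, whence the claimed identification upon taking cohomology.

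Finally, applying $\Hom_{\ZZ G}(M, -)$ to the short exact sequence from step one produces a long exact sequence in which $\Ext^i_{\ZZ G}(M, P)$ is sandwiched between $\Ext^i_{\ZZ G}(M, Q_0)$ and $\Ext^{i+1}_{\ZZ G}(M, Q_1)$, both of which vanish when $i \ge n+1$ by the definition of $\silp \ZZ G = n$. Hence $\Ext^i_{\QQ G}(M, P) = 0$ for all $\QQ G$-modules $M$ and all $i \ge n+1$, giving that the $\QQ G$-injective dimension of $P$ is at most $n$. None of the steps is individually deep; the only point that demands care is the natural isomorphism in step two, which underlies the fact that one may compute $\Ext_{\QQ G}$ into a $\QQ G$-module via a $\ZZ G$-projective resolution of the source.
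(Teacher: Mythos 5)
Your argument is correct, and it takes a genuinely different route from the paper's. The paper dispatches the lemma in two lines by citing Emmanouil's equality $\silp RG = \spli RG$ for $R = \QQ$ and $R = \ZZ$, together with the Gedrich--Gruenberg inequality $\spli \QQ G \le \spli \ZZ G$; in other words it transports the problem to $\spli$, where the change-of-rings comparison is already on record. Your proof works directly with $\silp$: you note that any projective $\QQ G$-module $P$ has $\pd_{\ZZ G} P \le 1$ (since $\QQ$ admits a length-one $\ZZ$-free resolution and $\ZZ G$ is $\ZZ$-free), you establish the change-of-rings isomorphism $\Ext^i_{\QQ G}(M,P) \cong \Ext^i_{\ZZ G}(M,P)$ for $\QQ G$-modules $M$ via the extension-of-scalars adjunction applied to a $\ZZ G$-projective resolution of $M$ tensored up to $\QQ G$, and you then absorb the length-one defect by the long exact sequence attached to a presentation $0 \to Q_1 \to Q_0 \to P \to 0$ with $Q_0, Q_1$ $\ZZ G$-projective, whose terms vanish above degree $\silp \ZZ G$. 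Each step checks out (you tacitly assume $\silp \ZZ G < \infty$, which is harmless since otherwise the inequality is vacuous). The paper's route buys brevity and fits the $\silp/\spli$ formalism used elsewhere in that literature, but it leans on the nontrivial theorem $\silp = \spli$; yours is longer but self-contained and essentially mirrors, on the injective-length side, the kind of change-of-rings estimate Gedrich and Gruenberg carry out for $\spli$, so it avoids Emmanouil's theorem altogether.
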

\begin{proof}
 By \cite[Theorem 4.4]{Emmanouil-OnCertainCohomologicalInvariantsOfGroups}, $\silp \QQ G = \spli \QQ G$ and $\silp \ZZ G = \spli \ZZ G$.  Combining with \cite[Lemma 6.4]{GedrichGruenberg-CompleteCohomologicalFunctors} that $\spli \QQ G \le \spli \ZZ G$ gives the result.
\end{proof}

\begin{Lemma}\label{lemma:complete tensored with Q}
 If $\Gcd G < \infty$ then for any $\QQ G$-module $M$ there is a natural isomorphism
\[ \widehat{H}^*(G, M) \otimes \QQ \cong \widehat{\Ext}^*_{\QQ G}(\QQ,M) \]
\end{Lemma}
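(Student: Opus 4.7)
The plan is to construct a single complex that simultaneously computes both sides. Since $\Gcd G < \infty$, the trivial module $\ZZ$ admits a strong complete resolution $T_*$ over $\ZZ G$. I would argue that $T_* \otimes_\ZZ \QQ$ is a strong complete resolution of $\QQ$ over $\QQ G$. Acyclicity is immediate from the flatness of $\QQ$ over $\ZZ$; projectivity of each $T_n \otimes_\ZZ \QQ$ over $\QQ G$ follows from the identification $\ZZ G \otimes_\ZZ \QQ \cong \QQ G$, together with the fact that projectivity passes through direct sums and summands; coincidence with an ordinary $\QQ G$-projective resolution of $\QQ$ in sufficiently high degrees is obtained by tensoring an ordinary $\ZZ G$-projective resolution $P_* \to \ZZ$ with $\QQ$.

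The delicate point --- and the step I expect to be the main obstacle --- is verifying the strong condition, namely that $\Hom_{\QQ G}(T_* \otimes_\ZZ \QQ, Q)$ is exact for every projective $\QQ G$-module $Q$. By the extension-of-scalars adjunction this complex is naturally isomorphic to $\Hom_{\ZZ G}(T_*, Q)$, where $Q$ is restricted to a $\ZZ G$-module. Tensoring a short exact sequence $0 \to \bigoplus \ZZ \to \bigoplus \ZZ \to \QQ \to 0$ of $\ZZ$-modules with $\ZZ G$ shows that $\pd_{\ZZ G} \QQ G \le 1$, hence every projective $\QQ G$-module has finite projective dimension when viewed over $\ZZ G$. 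Since each $T_i$ is $\ZZ G$-projective, $H^* \Hom_{\ZZ G}(T_*, -)$ is a cohomological functor vanishing on $\ZZ G$-projectives, and a standard dimension shifting argument along a finite projective resolution of $Q$ delivers the required exactness.

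Granting this, I would conclude with the chain of natural isomorphisms
\[ \widehat{\Ext}^*_{\QQ G}(\QQ, M) \cong H^* \Hom_{\QQ G}(T_* \otimes_\ZZ \QQ, M) \cong H^* \Hom_{\ZZ G}(T_*, M) \cong \widehat{H}^*(G, M), \]
where the middle identification is again the extension-of-scalars adjunction applied to the $\QQ G$-module $M$. The right hand group is already a $\QQ$-vector space through the action of $\QQ$ on $M$, so $\widehat{H}^*(G, M) \otimes \QQ \cong \widehat{H}^*(G, M)$, yielding the claimed natural isomorphism. Naturality in $M$ is transparent, as each identification in the chain is functorial in the coefficient module.
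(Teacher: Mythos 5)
Your proof is correct and follows the same overall strategy as the paper's: start from a strong complete $\ZZ G$-resolution $T_*$ of $\ZZ$, show $T_*\otimes_\ZZ\QQ$ is a strong complete $\QQ G$-resolution of $\QQ$, and identify the two cohomologies via the extension-of-scalars adjunction. Where you diverge is in verifying the ``strong'' condition for $T_*\otimes_\ZZ\QQ$, and your route is genuinely different. The paper first proves a separate lemma that $\silp\QQ G\le\silp\ZZ G$ (using $\silp=\spli$ over both rings by a result of Emmanouil, together with Gedrich--Gruenberg), and then invokes a generalisation of a lemma of Mislin and Talelli asserting that over a ring of finite silp every weak complete resolution is automatically strong. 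You instead note that the adjunction gives $\Hom_{\QQ G}(T_*\otimes_\ZZ\QQ,Q)\cong\Hom_{\ZZ G}(T_*,Q)$ for any $\QQ G$-projective $Q$, observe that $\pd_{\ZZ G}\QQ G\le 1$ (tensor a length-one free $\ZZ$-resolution of $\QQ$ with the $\ZZ$-flat ring $\ZZ G$, then pass to summands), and dimension-shift along a length-one $\ZZ G$-projective resolution of $Q$, using that each $T_i$ is $\ZZ G$-projective and that $T_*$ is strong over $\ZZ G$. This is more self-contained: it avoids the silp/spli machinery and the two external citations, relying only on the defining property of the strong complete resolution you already have in hand. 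The paper's route, on the other hand, localises the difficulty into a reusable general principle (finite silp forces weak $=$ strong), which is arguably more conceptual. Both arguments are sound, and your concluding identification $\widehat{H}^*(G,M)\otimes\QQ\cong\widehat{H}^*(G,M)$ (because $M$ is a $\QQ$-vector space) is the same observation the paper makes, just phrased at a different point in the chain of isomorphisms.
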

\begin{proof}
 Let $T_*$ be a strong complete resolution of $\ZZ$ by $\ZZ G$-modules, then $T_* \otimes \QQ$ is a strong complete resolution of $\QQ$ by $\QQ G$-modules.  By an obvious generalisation of \cite[Lemma 2.2]{MislinTalelli-FreelyProperlyOnFDHomotopySpheres}, if $\silp \QQ G \le \infty$ then any complete $\QQ G$-module resolution is a strong complete $\QQ G$-module resolution, so since $\silp \QQ G < \silp \ZZ G \le \infty$, $T_* \otimes \QQ$ is a strong complete resolution.  This gives a chain of isomorphisms for any $\QQ G$-module $M$:
\begin{align*}
 \widehat{H}^*(G, M) \otimes \QQ &\cong H^* \Hom_{\ZZ G}(T_*, M) \otimes \QQ \\
 &\cong H^* \Hom_{\QQ G}(T_* \otimes \QQ, M)  \\
 &\cong \widehat{\Ext}^*_{\QQ G}(\QQ , M)
\end{align*}
\end{proof}

\begin{Prop}\label{prop:cdQG finite then cdQG le GcdG}
 If $\cd_{\QQ} G < \infty$ then $\cd_{\QQ} G \le \Gcd G$.
\end{Prop}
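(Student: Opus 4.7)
The plan is to combine the Avramov--Martsinkovsky long exact sequence with Lemma~\ref{lemma:complete tensored with Q} and the elementary observation that, for a $\QQ G$-module $M$, cohomology over $\ZZ G$ agrees with cohomology over $\QQ G$. First, I would dispose of the case $\Gcd G = \infty$, where the inequality is vacuous, and henceforth assume $\Gcd G < \infty$ so that the Avramov--Martsinkovsky long exact sequence (Theorem~\ref{theorem:AM LES}) is available. Set $n = \Gcd G$ and let $M$ be an arbitrary $\QQ G$-module. The goal is to show $H^i_{\QQ G}(G, M) = 0$ for every $i > n$.

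Since $\GH^i(G, -) = 0$ whenever $i > n$, the long exact sequence of Theorem~\ref{theorem:AM LES} yields an isomorphism $H^i(G, M) \cong \widehat{H}^i(G, M)$ for all $i > n$. Next, because a $\ZZ G$-projective resolution $P_*$ of $\ZZ$ becomes, after rationalisation, a $\QQ G$-projective resolution $P_* \otimes \QQ$ of $\QQ$, and because $\Hom_{\ZZ G}(N, M) \cong \Hom_{\QQ G}(N \otimes \QQ, M)$ for any $\ZZ G$-module $N$ when $M$ is a $\QQ G$-module, one obtains a natural identification $H^i_{\ZZ G}(G, M) \cong H^i_{\QQ G}(G, M)$. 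In particular, to bound $\cd_\QQ G$ from above by $n$, it suffices to show that the ordinary cohomology $H^i(G, M)$ over $\ZZ G$ vanishes in degrees $i > n$ for every $\QQ G$-module $M$.

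By the isomorphism of the previous paragraph, this reduces to showing $\widehat{H}^i(G, M) = 0$ for $i > n$. Now apply Lemma~\ref{lemma:complete tensored with Q}: since $\Gcd G < \infty$, we have a natural isomorphism
\[ \widehat{H}^i(G, M) \otimes \QQ \cong \widehat{\Ext}^i_{\QQ G}(\QQ, M). \]
Because $M$ is a $\QQ$-vector space, $\widehat{H}^i(G, M)$ inherits a $\QQ$-module structure, so tensoring with $\QQ$ is the identity and the left-hand side is just $\widehat{H}^i(G, M)$. Finally, the hypothesis $\cd_\QQ G < \infty$ means $\QQ$ has a finite $\QQ G$-projective resolution, and complete Ext vanishes on modules of finite projective dimension; therefore $\widehat{\Ext}^i_{\QQ G}(\QQ, M) = 0$ for all $i$, forcing $\widehat{H}^i(G, M) = 0$ and hence $H^i(G, M) = 0$ for $i > n$, as required.

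There is no substantial obstacle: the construction is essentially a three-line diagram chase once the pieces are assembled. The only point requiring care is the verification that the complete cohomology $\widehat{H}^i(G, M)$ is already rational when $M$ is, so that the tensor product with $\QQ$ in Lemma~\ref{lemma:complete tensored with Q} can be discarded; this follows immediately from the definition $\widehat{H}^i(G, M) = H^i\Hom_{\ZZ G}(T_*, M)$ since each $\Hom_{\ZZ G}(T_j, M)$ is a $\QQ$-module whenever $M$ is.
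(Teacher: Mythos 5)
Your proof is correct and uses the same essential ingredients as the paper's: the Avramov--Martsinkovsky long exact sequence, Lemma~\ref{lemma:complete tensored with Q}, and the vanishing $\widehat{\Ext}^i_{\QQ G}(\QQ,M)=0$ coming from $\cd_\QQ G<\infty$. The only (cosmetic) difference is organizational: the paper tensors the entire long exact sequence with $\QQ$ to produce a long exact sequence over $\QQ G$ and extracts the isomorphism $\GH^i(G,M)\otimes\QQ\cong\Ext^i_{\QQ G}(\QQ,M)$ from it, whereas you first read off $H^i(G,M)\cong\widehat{H}^i(G,M)$ for $i>\Gcd G$ directly from the integral sequence and then rationalise term by term, exploiting the observation (which the paper does not spell out) that all the relevant groups are already $\QQ$-vector spaces when $M$ is a $\QQ G$-module, so the tensoring with $\QQ$ is vacuous.
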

\begin{proof}
There is nothing to show if $\Gcd G = \infty$ so assume that $\Gcd G < \infty$.  Since $\QQ$ is flat over $\ZZ$, tensoring the Avramov--Martsinkovsky long exact sequence with $\QQ$ preserves exactness.  Combining this with Lemma \ref{lemma:complete tensored with Q} and the well known fact that for any $\QQ G$-module $M$ there is a natural isomorphism \cite[p.2]{Bieri-HomDimOfDiscreteGroups} 
 \[H^*(G, M) \otimes \QQ \cong \Ext^*_{\QQ G}(\QQ, M)\]
 gives the long exact sequence
\[ \cdots \longrightarrow \GH^i(G, M) \otimes \QQ \longrightarrow \Ext^i_{\QQ G}(\QQ, M) \longrightarrow \widehat{\Ext}^i_{\QQ G}(\QQ,M) \longrightarrow \cdots \]
Since $\cd_{\QQ} G < \infty$, we have that $\widehat{\Ext}^i_{\QQ G}(\QQ,M) = 0$ \cite[4.1(i)]{Kropholler-OnGroupsOfTypeFP_infty}.  Thus there is an isomorphism for all $i$,
$$\GH^i(G, M) \otimes \QQ \cong \Ext^i_{\QQ G}(\QQ, M)$$
and the result follows.
\end{proof}

\bibliographystyle{plain}

\begin{thebibliography}{10}

\bibitem{GuidoBookOfConjectures}
{\em Guido's book of conjectures}, volume~40 of {\em Monographies de
  L'Enseignement Math\'ematique [Monographs of L'Enseignement Math\'ematique]}.
\newblock L'Enseignement Math\'ematique, Geneva, 2008.
\newblock A gift to Guido Mislin on the occasion of his retirement from ETHZ
  June 2006, Collected by Indira Chatterji.

\bibitem{AsadollahiBahlekehSalarian-HierachyCohomologicalDimensions}
Javad Asadollahi, Abdolnaser Bahlekeh, and Shokrollah Salarian.
\newblock On the hierarchy of cohomological dimensions of groups.
\newblock {\em J. Pure Appl. Algebra}, 213(9):1795--1803, 2009.

\bibitem{AvramovMartsinkovsky-AbsoluteRelativeAndTateCohomology-FiniteGorenstein}
Luchezar~L. Avramov and Alex Martsinkovsky.
\newblock Absolute, relative, and {T}ate cohomology of modules of finite
  {G}orenstein dimension.
\newblock {\em Proc. London Math. Soc. (3)}, 85(2):393--440, 2002.

\bibitem{BahlekehDembegiotiTalelli-GorensteinDimensionAndProperActions}
Abdolnaser Bahlekeh, Fotini Dembegioti, and Olympia Talelli.
\newblock Gorenstein dimension and proper actions.
\newblock {\em Bull. Lond. Math. Soc.}, 41(5):859--871, 2009.

\bibitem{BensonCarlson-ProductsInNegativeCohomology}
D.~J. Benson and Jon~F. Carlson.
\newblock Products in negative cohomology.
\newblock {\em J. Pure Appl. Algebra}, 82(2):107--129, 1992.

\bibitem{Bieri-HomDimOfDiscreteGroups}
Robert Bieri.
\newblock {\em Homological dimension of discrete groups}.
\newblock Queen Mary College Mathematical Notes. Queen Mary College Department
  of Pure Mathematics, London, second edition, 1981.

\bibitem{Bouc-LeComplexeDeChainesDunGComplexe}
Serge Bouc.
\newblock Le complexe de cha\^\i nes d'un {$G$}-complexe simplicial acyclique.
\newblock {\em J. Algebra}, 220(2):415--436, 1999.

\bibitem{Bredon-EquivariantCohomologyTheories}
Glen~E. Bredon.
\newblock {\em Equivariant cohomology theories}.
\newblock Lecture Notes in Mathematics, No. 34. Springer-Verlag, Berlin, 1967.

\bibitem{Brown}
Kenneth~S. Brown.
\newblock {\em Cohomology of groups}, volume~87 of {\em Graduate Texts in
  Mathematics}.
\newblock Springer-Verlag, New York, 1994.
\newblock Corrected reprint of the 1982 original.

\bibitem{CornickKropholler-OnCompleteResolutions}
Jonathan Cornick and Peter~H. Kropholler.
\newblock On complete resolutions.
\newblock {\em Topology Appl.}, 78(3):235--250, 1997.

\bibitem{CornickKropholler-HomologicalFinitenessConditions}
Jonathan Cornick and Peter~H. Kropholler.
\newblock Homological finiteness conditions for modules over group algebras.
\newblock {\em J. London Math. Soc. (2)}, 58(1):49--62, 1998.

\bibitem{Davis}
Michael~W. Davis.
\newblock {\em The geometry and topology of {C}oxeter groups}.
\newblock Princeton University Press, Princeton and Oxford, 2008.

\bibitem{Degrijse-ProperActionsAndMackeyFunctors}
Dieter Degrijse.
\newblock {B}redon cohomological dimensions for proper actions and {M}ackey
  functors.
\newblock {\em ArXiv e-prints}, 2013.

\bibitem{DembegiotiTalelli-ANoteOnCompleteResolutions}
Fotini Dembegioti and Olympia Talelli.
\newblock A note on complete resolutions.
\newblock {\em Proc. Amer. Math. Soc.}, 138(11):3815--3820, 2010.

\bibitem{EilenbergMoore-FoundationsOfRelativeHomologicalAlgebra}
Samuel Eilenberg and J.~C. Moore.
\newblock Foundations of relative homological algebra.
\newblock {\em Mem. Amer. Math. Soc. No.}, 55:39, 1965.

\bibitem{Emmanouil-OnCertainCohomologicalInvariantsOfGroups}
Ioannis Emmanouil.
\newblock On certain cohomological invariants of groups.
\newblock {\em Adv. Math.}, 225(6):3446--3462, 2010.

\bibitem{EnochsJenda-GorensteinInjectiveAndProjectiveModules}
Edgar~E. Enochs and Overtoun M.~G. Jenda.
\newblock Gorenstein injective and projective modules.
\newblock {\em Math. Z.}, 220(4):611--633, 1995.

\bibitem{EnochsJenda-RelativeHomologicalAlgebra1}
Edgar~E. Enochs and Overtoun M.~G. Jenda.
\newblock {\em Relative homological algebra. {V}olume 1}, volume~30 of {\em de
  Gruyter Expositions in Mathematics}.
\newblock Walter de Gruyter GmbH \& Co. KG, Berlin, extended edition, 2011.

\bibitem{Gandini-CohomologicalInvariants}
Giovanni Gandini.
\newblock Cohomological invariants and the classifying space for proper
  actions.
\newblock {\em Groups Geom. Dyn.}, 6(4):659--675, 2012.

\bibitem{GedrichGruenberg-CompleteCohomologicalFunctors}
T.V. Gedrich and K.W. Gruenberg.
\newblock Complete cohomological functors on groups.
\newblock {\em Topology and its Applications}, 25(2):203 -- 223, 1987.

\bibitem{Goichot-HomologieDeTateVogel}
Fran{\c{c}}ois Goichot.
\newblock Homologie de {T}ate-{V}ogel \'equivariante.
\newblock {\em J. Pure Appl. Algebra}, 82(1):39--64, 1992.

\bibitem{Holm-GorensteinHomologicalDimensions}
Henrik Holm.
\newblock Gorenstein homological dimensions.
\newblock {\em J. Pure Appl. Algebra}, 189(1-3):167--193, 2004.

\bibitem{KrophollerTalelli-PropertyOfFundamentalGroupsOfGraphOfFiniteGroups}
P.~H. Kropholler and Olympia Talelli.
\newblock On a property of fundamental groups of graphs of finite groups.
\newblock {\em J. Pure Appl. Algebra}, 74(1):57--59, 1991.

\bibitem{Kropholler-OnGroupsOfTypeFP_infty}
Peter~H. Kropholler.
\newblock On groups of type ({F}{P})$_\infty$.
\newblock {\em Journal of Pure and Applied Algebra}, 90(1):55 -- 67, 1993.

\bibitem{KrophollerMislin-GroupsOnFinDimSpacesWithFinStab}
Peter~H. Kropholler and Guido Mislin.
\newblock Groups acting on finite dimensional spaces with finite stabilizers.
\newblock {\em Commentarii Mathematici Helvetici}, 73:122--136, 1998.
\newblock 10.1007/s000140050048.

\bibitem{KrophollerWall-GroupActionsOnAlgebraicCellComplexes}
Peter~H. Kropholler and Charles T.~C. Wall.
\newblock Group actions on algebraic cell complexes.
\newblock {\em Publ. Mat.}, 55(1):3--18, 2011.

\bibitem{Lueck}
Wolfgang L{\"u}ck.
\newblock {\em Transformation groups and algebraic {$K$}-theory}, volume 1408
  of {\em Lecture Notes in Mathematics}.
\newblock Springer-Verlag, Berlin, 1989.
\newblock Mathematica Gottingensis.

\bibitem{Luck-TypeOfTheClassifyingSpace}
Wolfgang L{\"u}ck.
\newblock The type of the classifying space for a family of subgroups.
\newblock {\em Journal of Pure and Applied Algebra}, 149(2):177 -- 203, 2000.

\bibitem{MacLane-Homology}
Saunders Mac~Lane.
\newblock {\em Homology}.
\newblock Classics in Mathematics. Springer-Verlag, Berlin, 1995.
\newblock Reprint of the 1975 edition.

\bibitem{Mislin-TateViaSatellites}
Guido Mislin.
\newblock Tate cohomology for arbitrary groups via satellites.
\newblock {\em Topology and its Applications}, 56(3):293 -- 300, 1994.

\bibitem{MislinTalelli-FreelyProperlyOnFDHomotopySpheres}
Guido Mislin and Olympia Talelli.
\newblock On groups which act freely and properly on finite dimensional
  homotopy spheres.
\newblock In {\em Computational and geometric aspects of modern algebra
  ({E}dinburgh, 1998)}, volume 275 of {\em London Math. Soc. Lecture Note
  Ser.}, pages 208--228. Cambridge Univ. Press, Cambridge, 2000.

\bibitem{Nucinkis-CohomologyRelativeGSet}
Brita E.~A. Nucinkis.
\newblock Cohomology relative to a {G}-set and finiteness conditions.
\newblock {\em Topology and its Applications}, 92(2):153 -- 171, 1999.

\bibitem{Nucinkis-EasyAlgebraicCharacterisationOfUniversalProperGSpaces}
Brita E.~A. Nucinkis.
\newblock Is there an easy algebraic characterisation of universal proper
  {G}-spaces?
\newblock {\em manuscripta mathematica}, 102:335--345, 2000.
\newblock 10.1007/s002291020335.

\bibitem{Me-CohomologicalMackey}
Simon St. John-Green.
\newblock Cohomological finiteness conditions for {M}ackey and cohomological
  {M}ackey functors.
\newblock {\em ArXiv e-prints}, 2013.

\end{thebibliography}

\end{document}